\theoremstyle{plain}
\newtheorem{theorem}{Theorem}[section]
\newtheorem{corollary}{Corollary}[section]
\newtheorem{lemma}{Lemma}[section]
\newtheorem{proposition}{Proposition}[section]
\theoremstyle{remark}
\newtheorem{definition}{Definition}[section]
\newtheorem{remark}{Remark}[section]
\newtheorem{example}{Example}[section]
\numberwithin{equation}{section}
\newcommand{\bC}{{\mathbb C}}
\newcommand{\bZ}{{\mathbb Z}}
\newcommand{\bN}{{\mathbb N}}
\newcommand{\Bo}{\mathcal{B} }
\newcommand{\Fe}{\mathcal{F} }
\newcommand{\al}{\alpha }
\newcommand{\bet}{\beta }
\newcommand{\la}{\lambda}
\newcommand{\<}{\langle }
\newcommand{\fg}{\mathfrak g }
\newcommand{\fo}{\mathfrak o }
\newcommand{\fsp}{\mathfrak {sp} }
\renewcommand{\>}{\rangle}
\begin{document}
\title{Vertex operators arising from Jacobi\,-\,Trudi identities}
\author{Naihuan Jing}
\address{Department of Mathematics, North Carolina State University, Raleigh, NC 27695, USA}
\email{jing@math.ncsu.edu}
\author{Natasha Rozhkovskaya}
\address{Department of Mathematics, Kansas State University, Manhattan, KS 66502, USA}
\email{rozhkovs@math.ksu.edu}
\keywords{Jacobi-Trudi identity, Giambelli identity, Boson-Fermion correspondence, Clifford algebra,
vertex operators, Schur functions, generalized symmetric functions}         %
\subjclass[2010]{Primary 05E05, Secondary 17B65, 17B69, 11C20}

\begin{abstract}
We  give an interpretation of  the boson-fermion correspondence as a direct consequence of  Jacobi\,--\,Trudi identity. This viewpoint enables us
 to construct  from a generalized version of the Jacobi\,--\,Trudi identity
the action of  Clifford algebra  on polynomial algebras that arrives as  analogues  of the algebra of symmetric functions. A  generalized Giambelli identity is also proved to follow from that identity.  As applications, we obtain explicit formulas for vertex operators corresponding to characters of the classical Lie algebras, shifted Schur functions, and generalized Schur
symmetric functions associated to linear recurrence relations.
\end{abstract}
\maketitle

\section{Introduction}

The classical boson-fermion correspondence establishes an isomorphism of two modules of the Heisenberg algebra $\mathcal H$. One of the main tools of this correspondence is the identification of each of the  graded  components of the bosonic Fock space, which  is a  polynomial algebra  $B^{(m)}=\bC[p_1,p_2,\dots]$,  with the algebra of symmetric functions in some variables. Then $p_i$'s  are interpreted as the power sum  symmetric functions. Thus,  the  properties of  symmetric functions  proved to be very useful for  applications  of the boson-fermion correspondence, and were used by many authors in various papers.

Besides the Heisenberg algebra, there are also other important  algebraic structures acting on the Fock space:  a Clifford algebra, the Virasoro algebra, and the infinite-dimensional  Lie algebra $gl_\infty$. Their actions are closely related to each other. In particular, given an action of the Clifford algebra, using the normal ordered product, one can immediately construct the action of the Heisenberg algebra, the Virasoro algebra and of $gl_\infty$ (see e.g.   \cite{Bom},  15.3, 16.3, 16.4). One should  also notice that the  boson-fermion correspondence  provides  an important  example of an isomorphism with lattice vertex algebras (see e.g. \cite {FLM,FBZ,K,Bom}, etc.).

The action of the Clifford algebra on the fermionic Fock space is given by contraction and wedge operators (see (\ref{f+}),\,(\ref{f-})), and the action  on the  bosonic  Fock space is given by so-called vertex operators (see  (\ref{psi2}),\,(\ref{psi21})).  Note that these vertex operators are written  as products of two  exponential  functions, one of which ``contains all  differentiations'', and the other one -- all ``multiplication  operators''. The existence of such decomposition is very important for  further applications, where normal ordered products of such operators are used.

One of the most prominent applications of vertex operators action of  the Clifford algebra is  the study of  solutions  of soliton  equations.
 The algebraic structure of solutions of soliton equations is described in    \cite{ DJKM, JM}.  Vertex operators  presentation of the action of  the Clifford  algebra on a function space allows one to  construct the action of the the group $\mathrm{GL}_\infty$ on the same space. The orbit of the vacuum vector  under this action is an infinite-dimensional Grassmannian manifold, where the defining equations are equivalent to the soliton equations.
  \\

In  this paper   we  highlight  the  core connection  of  the boson-fermion correspondence to the theory of symmetric functions. Based on two  simple observations, it  allows one not only  to simplify   some classical computations,  but to recover the constructions of the boson-fermion correspondence  through generalizations of symmetric functions and,  as an immediate  consequence, to obtain the action of  other related algebraic structures on the polynomial rings.

The first  observation is that the boson-fermion  correspondence   is equivalent to the statement of Jacobi\,--\,Trudi  identity (\ref{classjt}). This  identity  expresses Schur functions as determinants  of  matrices with  complete symmetric functions as the entries.  Exactly the Jacobi\,--\,Trudi   identity  provides the  action of fermions on the ring   of symmetric functions. It seems that this simple  fact is generally overlooked in the literature, though  several papers can be named, where  versions of Jacobi-Trudi  identities are applied: (see e.g. \cite{C1,C2} where variations of  Jacobi\,--\,Trudi  identities  are used for construction of  $\tau$-functions). Note that, as we show below,  the so-called Giambelli identity in a  general  form is   equivalent  to a general  Jacobi\,--\,Trudi identity statement.

The second observation is the following. Traditionally,  the graded  components of the  bosonic Fock space are  identified with a  polynomial  algebra  $B^{(m)}=\bC[p_1,p_2,\dots]$, where  generators $p_i$  are power sums. This choice of  generators of the ring of  symmetric functions    gives very  simple description  of the action of generators of the Heisenberg algebra as  multiplication and differentiation  operators (\ref{al}). However,  it has  considerable disadvantages as well.
First, many computations with these generators  are technically involved:
 in contrast to  complete symmetric functions or  elementary  symmetric functions,
power sums are not  elements of the  well-studied  basis  of Schur functions $s_\lambda$.
The second  disadvantage  is  crucial for  applications  to  generalizations:  some  generalizations  of symmetric functions do not have a  unique  natural analogue of power sums (see e.g. section 3.1 in  \cite{GR}  or (12.20) - (12.25) in  \cite {OO1}).

As it will be shown below,  some computations can be significantly simplified  and standardized  if one  considers complete (or elementary) symmetric   functions  as the  primary  set of generators of  the polynomial algebra  $B^{(m)}$. This  approach was already used for vertex operators acting on the ring of  classical symmetric  functions, here we recall just a few of these cases. In \cite{Zel}  so-called Bernstein operators are described, which also appear  in  example I.5.29  of   \cite{Md}. In the same manner examples    III.5.8, III.8.8  of  \cite{Md}  interpret  the results  on vertex operator presentation of Hall-Littlewood and Schur Q-functions (see \cite {J, J12} for the original construction). In \cite{FVP} the categorification of boson-fermion correspondence is  also  based on  presenting the action of Clifford algebra through complete and  elementary  symmetric functions as well as  their adjoint  operators.
\\

These two  observations motivate the  main idea and the goal  of  this note to develop a uniform approach to construct  vertex operators  for different  generalizations of  symmetric functions that naturally appear in representation theory. Given an  analogue of  a  Jacobi\,--\,Trudi identity for a variation of the algebra of  symmetric functions, one  immediately  writes  the action of the Clifford algebra on that  algebra (and, using the normal ordered product, the  action of the Heisenberg algebra, the Virasoro algebra and $gl_\infty$). In some cases   it is possible  to  go  further  and to decompose  vertex operators  as  a product of  a generating  function for  multiplication operators and of a generating function for adjoint operators.
\\

  We would like to remark that there are  numerous   generalizations of  symmetric functions in mathematics,  and many of those  enjoy analogues of the Jacobi-Trudi formula.  It would not be  possible to recall all  of them here, so we mention  just a couple of inspiring examples.    Many examples are contained in \cite{Md} (including the  variations  discussed in \cite{Md1}). But more examples can be treated using the method in our paper and
  we list a few as follows. Characters of simple classical  Lie algebras
enjoy an analogue of the Jacobi-Trudi identity  \cite{FH, JTClass2}, as well as double symmetric functions \cite{Mol-dob} and, in particular, shifted symmetric functions \cite{OO1}. The latter give eigenvalues of   the basis elements  of the center of the universal enveloping algebra of the Lie algebra $gl_n$.   An analogue of  the Jacobi\,--\,Trudi  identity, but not in a determinant form,  exists for Macdonald polynomials \cite{ML}. In  \cite{C2}, \cite {SV}  a family of generalized Schur functions  is considered, where  powers of $x^k$ in the definition of a Schur function are  replaced by  recursively defined  functions $\phi_k(x)$.  Cherednik-Bazhanov-Reshetikhin  theorem   is  a Jacobi-Trudi-like  description of a  family of quantum transfer matrices of a quantum  integrable spin chains model with  rational $\mathrm{GL}(N)$-invariant R-matrix \cite {C1, Baz, Cher}. This list can be continued.
Many of these examples (in particular,   generalizations   \cite{Md}, \cite{SV}, \cite{C2}) fit into  the unified construction that  we discuss in this paper,  and therefore, the action of Clifford algebra on these examples is  defined in a  straightforward way. In some cases  it is possible to  write  vertex operators  in simple  form,  similar to the classical case.
\\

  The  paper is  organized as follows.
  In Section 2 we review  classical boson-fermion correspondence associated to symmetric functions.  In Section 3  we use
  a generalized form  of Jacobi\,--\,Trudi identity  to define combinatorially   analogues of Schur functions,  including  complete  and  elementary symmetric functions. We prove that they  enjoy  properties similar to  properties of classical  symmetric functions, such as Newtons' identity and Giambelli identity.
In Section  4 we define  the  action  of  Clifford algebra on generalized symmetric functions and give  vertex operators presentations of these functions.
In the classical case  the action  of  generators of Clifford  algebra  can be  expressed through  `multiplication' and  `differentiation'  operators.  We introduce in Section 4 analogues of  adjoint operators and write similar decomposition  for   the action of Clifford algebra on  generalized symmetric functions. In Section 5
we apply constructions of Section 3 and 4 to write closed  formulas for vertex operators in   several known  examples:  universal characters of classical Lie algebras, shifted symmetric functions and   symmetric functions associated to linear recurrence relations.

\section{Boson-fermion correspondence}
Recall the  algebraic  construction of the classical boson-fermion  correspondence (see e.g. \cite{DJKM, JM, Bom}).

Let $V=\oplus _{j\in \bZ}\bC\, v_j$  be an infinite-dimensional complex vector space with a linear  basis $\{v_j\}_{j\in \bZ}$.
 Define $F^{(m)}$ ($m\in \bZ$)  to be a linear span of semi-infinite wedge products
$v_{i_m}\wedge v_{i_{m-1}}\wedge\dots  $ with the properties \\
(1)\quad  $i_m>i_{m-1}>\dots$,  \\
(2)\quad  $i_k=k$  for $k<<0$.\\
The monomial   of the form $|m\>=v_m\wedge v_{m-1}\wedge\dots $ is called  the $m$th vacuum vector.
The elements of $F^{(m)}$ are linear  combinations of monomials  $ v_I= v_{i_1}\wedge v_{i_2}\wedge\dots $ that are different from $| m\>$ only at finitely many places.

We define the fermionic  Fock space as  the graded space $ \Fe=\oplus_{m\in \bZ} F^{(m)}$.
The  Clifford algebra (fermions)  acts on the  space $\Fe$  by wedge  operators $\psi_k$  and contraction operators $\psi^*_k$ ($k\in \bZ)$.
More precisely, these operators are defined by
\begin{align}\label{f+}
\psi_k \,( v_{i_1}\wedge v_{i_2}\wedge\dots)= v_k\wedge v_{i_1}\wedge v_{i_2}\wedge\dots,
\end{align}
and
\begin{align}\label{f-}
\psi^*_k\, ( v_{i_1}\wedge v_{i_2}\wedge\dots)=
 \delta_{k, i_1 }v_{i_2}\wedge v_{i_3}\wedge\dots
- \delta_{k, i_2 }v_{i_1}\wedge v_{i_3}\wedge\dots
+\delta_{k, i_3 }v_{i_1}\wedge v_{i_2}\wedge\dots-\dots\quad.
\end{align}
Then  the following relations are satisfied:
\begin{align*}
\psi_k\psi^*_m+\psi^*_m\psi_k=\delta_{k,m},\quad
\psi_k\psi_m+\psi_m\psi_k=0,\quad
\psi^*_k\psi^*_m+\psi^*_m\psi^*_k=0.
\end{align*}
Combine the operators $\psi_k, \psi^*_k$  in generating functions  (formal distributions)
\begin{align}
\Psi(u)=\sum_{k\in \bZ} \psi_k u^{ k}\quad \text{ and } \quad  
\Psi^*(u)=\sum_{k\in \bZ} \psi^*_k u^{- k}. 
\end{align}
 Using  the normal ordered product, we introduce the  formal distribution
$$
\alpha(u)=:\Psi(u)\Psi^*(u):\, =\Psi(u)_+\Psi^*(u)-\Psi^*(u)\Psi(u)_-, 
$$
where by definition of the normal ordered product,
$$
\Psi(u)_+=\sum_{k\ge 1} \psi_ku^{k},\quad \Psi(u)_-=\sum_{k\le 0} \psi_ku^{k}. 
$$
It can be verified that the coefficients $\alpha_k$ of the formal distribution  $\alpha(u)=\sum \alpha_k u^{-k}$  and the central element $1$ satisfy  relations of the Heisenberg algebra $\mathcal {A}$ (see e.g. \cite{Bom}, 16.3):
$$
[1,\alpha_k]=0,\quad   [\alpha_k,\alpha_m] =m\delta _{m,-k}  \quad (k,m\in \bZ).
$$
In this way the Fock space $\Fe$  becomes  an $\mathcal {A}$-module.

There is also a natural action of  the Heisenberg algebra $\mathcal {A}$  on the  boson space, which is  the algebra  of polynomials $\mathcal B^{(m)}=z^m\bC[ p_1, p_2,\dots ]$:  
\begin{align}\label{al}
\alpha_n=\frac{\partial}{\partial_{p_n}}, \quad \alpha_{-n}=n p_n,\quad  \alpha_0=m\quad(  n\in \bN, \quad m\in \bZ).
\end{align}
The boson\,--\,fermion correspondence  identifies the spaces $\Bo^{(m)}$ and $ \Fe^{(m)}$ as  equivalent  $\mathcal{A}$-modules (see e.g.  \cite{DJKM, F,JM,Bom}).
The vacuum vector $z^m$  is identified with $|m\>=v_m\wedge v_{m-1}\wedge\dots$.
The exact correspondence relies   on the  interpretation of  $p_k$'s  as  symmetric functions. More  precisely, $p_k$  is interpreted as a   $k$-th (normalized) power sum in some  set of variables. Then  each graded  component  $\Bo^{(m)}$ is viewed as the  ring of symmetric functions, which is  known to be the ring of  polynomials in variables $p_k$'s. There is a linear  basis of the  ring of symmetric functions,
 consisting of  Schur  symmetric functions $z^m s_\lambda$.
Then the  linear basis of elements  $v_\lambda=(v_{\lambda_1+m}\wedge v_{\lambda_2+m-1}\wedge v_{\lambda_3+m-2}\dots  )$ of $ \Fe^{(m)}$, labeled by partitions $\lambda=(\lambda_1,\ge \lambda_2,\ge \dots,\ge \lambda_l\ge 0)$   corresponds to the linear basis  $z^m s_\lambda$ of $\Bo^{(m)}$  (see e.g. \cite{Bom}  Theorem 6.1).

\smallskip

Moreover, by  this  correspondence the action of  operators  $\psi_k,\psi^*_k$  on $\Fe$ is carried  to the action  on the graded space  $\Bo=\oplus  \Bo^{(m)}$, where it is   described by  generating functions $\Psi (u), \Psi^*(u)$, written in the so-called vertex operator form (\ref{psi2},\ref{psi21}). \bigskip

\section{General Jacobi\,--\,Trudi identity}\label{ident}

\subsection* {Jacobi-Trudi identity for  classical symmetric functions}
First  let us recall the statement of the original  Jacobi-Trudi identity.
For more details please refer e.g. to \cite{Md, AL}.
Recall that the   ring of  classical symmetric functions in the variables $ (x_1,x_2,\dots)$ is a polynomial ring in  the coherent variables $\{h_k\}_{k\in \bZ_{\ge 0}}$, which  are  complete symmetric functions defined by
\begin{align}\label{hclass}
h_k=\sum_{i_1\le i_2\le \dots\le i_k }x_{i_1}x_{i_2}\dots  x_{i_k},\quad h_0=1.
\end{align}
The ring of symmetric functions possesses  a linear basis of   Schur  functions $s_\lambda$,  labeled by partitions $\lambda=(\lambda_1,\ge \lambda_2,\ge \dots,\ge \lambda_l\ge 0)$. It is  known that Schur symmetric  functions can be expressed as polynomials in the $h_k$ by  the Jacobi\,-\,Trudi  formula
\begin{align}\label{classjt}
s_\lambda =\det [h_{\lambda_i-i+j}]_{\{1\le i,j\le l\}}.
\end{align}

\subsection*{Generalization}
 Our goal is to  define  the action of Clifford algebra  on rings of  different analogues of  symmetric functions and, whenever it is  possible,  to  write  closed formulas  for vertex operators.   A generalized version of Jacobi-Trudi identity  will be the main tool  for that.
 While  in the classical  setting Jacobi-Trudi identity is a theorem that describes  a property of  Schur symmetric functions,  for our purposes firsthand  we want to consider
the elements  {\it defined} by a generalized version  of Jacobi-Trudi identity. Under the condition  of   linear independence of these elements,
this general setting  immediately implies  the action of Clifford algebra on their linear span.
Then  this general setting  can be applied to particular examples as follows. Given a variation of symmetric functions with an established analogue of Jacobi-Trudi identity, one can match  that identity as a  particular case of (\ref{jt1}) with  appropriate interpretation of  $h^{(p)}_k$'s  and consider the  action  (\ref{p10}, \ref{p100}) of Clifford  algebra.  Formulas (\ref{vk22}, \ref{vk11}) express the action of  fermions through  multiplication operators and corresponding adjoint operators,  and in some cases it is possible  go  further and  to deduce   simple  formulas for the generating functions  (vertex operators).  The examples of such applications are provided in  Section 5.

\smallskip

We start with a set of independent  variables  $\{h^{(0)}_k\}_{k\in \bZ_{k> 0}}$.    Let $B= \bC[h^{(0)}_1,  h^{(0)}_2,\dots ]$  be  a polynomial ring in these variables.  Let  
$$
h^{(0)}_{0}=1, \quad\text{and}\quad  h^{(0)}_{k}=0 \quad \text{for}\quad k<0.
$$
The algebra $B$ is graded by $deg(h_n^{(0)})=n$, so
\begin{equation}
B=\bigoplus_{n=0}^{\infty} B_n,
\end{equation}
where $B_n$ is the homogeneous subspace of degree $n$ and $B_0=\mathbb C$. Let $B^{\leq n}=\oplus_{i=0}^{n}B_{n-i}$.
Then $B=\bigcup_{n=0}^{\infty}B^{\leq n}$ and clearly $gr\, B\simeq B$.
Suppose that  $\{h^{(r)}_k\}$ $(k,r\in \bZ)$   is a  set of elements of $B$ such that
$h_k^{(r)}\in B^{\leq k+r}$. 
We {\it require }that  
\begin{align}\label{prop11}
h^{(k)}_{-k}=1, \quad \text{and} \quad h^{(r)}_{k}=0 \quad \text{for}\quad k+r<0.
\end{align}
For any  partition $\lambda=(\lambda_1\ge \lambda_2\ge \dots \ge \lambda_l\ge 0)$ the  polynomial $s_\lambda$ in   $\{h^{(0)}_k\}_{k\in \bZ}$  is {\it defined} by the formula
\begin{align}\label{jt1}
s_\lambda =\det \left[h_{\lambda_i-i+1}^{(j-1)}\right]_{\{1\le i,j\le l\}}.
\end{align}


We also {\it require}  that  
polynomials  $\{s_{\lambda}\}$ form a linear basis of $ B=\bC[h^{(0)}_1, h^{(0)}_2, \dots]$  when  $\lambda$  goes over the set of all partitions. In the following we will frequently refer to $s_{\la}$ as the {\it generalized Schur function} associated to the partition $\lambda$.

Moreover, we can extend the definition of  polynomials  $s_{\lambda}\in B$   and define  such a polynomial for any   integer vector $\lambda= (\lambda_1, \lambda_2,\dots, \lambda_l)$  -- it does not have to be a partition or even a
composition,  parts $\lambda_i$ can be negative,  and can be listed in  any  order.  We  represent the integer vector $\lambda$ as  a sequence with finitely many non-zero terms:  $\lambda= (\lambda_1, \lambda_2,\dots \lambda_l, 0,0,0\dots )$.
 Given  such $\lambda$, we associate the infinite matrix
\begin{align*}
S_\lambda=
\begin{pmatrix}
h^{(0)}_{\lambda_1} &h^{(1)}_{\lambda_1} &h^{(2)}_{\lambda_1}  &\dots&h^{(l-1)}_{\lambda_1}&| &\dots\\
h^{(0)}_{\lambda_2-1} &h^{(1)}_{\lambda_2-1} &h^{(2)}_{\lambda_2-1}&\dots &h^{(l-1)}_{\lambda_2-1}&| &\dots\\
\dots &\dots&\dots&\dots&\dots&|&\dots\\
h^{(0)}_{\lambda_l-l+1} &h^{(1)}_{\lambda_l-l+1} &h^{(2)}_{\lambda_l-l+1} &\dots&h^{(l-1)}_{\lambda_l-l+1}&| &\dots\\
\\
\hline\\
h^{(0)}_{-l} &h^{(1)}_{-l} &h^{(2)}_{-l}&\dots &h^{(l-1)}_{-l} &|&\dots\\
h^{(0)}_{-l-1} &h^{(1)}_{-l-1} &h^{(2)}_{-l-1}&\dots &h^{(l-1)}_{-l-1} &|&\dots\\
\dots &\dots&\dots&\dots&\dots&|&\dots
\end{pmatrix}
\end{align*}
Note that   by property (\ref{prop11}),  $S_\lambda$  has  a  block form
$$
\begin{pmatrix}
A&B\\
0&D
\end{pmatrix}
$$
 with $A$  being an  $l\times l$ matrix and $D$ --  an infinite upper-triangular  matrix with one's on the diagonal. Therefore, if we consider  the sequence of determinants of the $N\times N$ parts of the matrix $S_\lambda$
 $$
\det \left[h_{\lambda_i-i+1}^{(j-1)}\right]_{i,j=1,\dots, N}, \quad N=1,2,3,\dots ,
 $$
 for  large enough $N$  it will stabilize to a polynomial in  variables $h^{(0)}_1, h^{(0)}_2,\dots $, which we will  still denote as $s_\lambda$ or as $\det S_\lambda$.

 \begin{lemma}  For any integer vector $\lambda$ the following properties  hold:

\begin{align}\label{p2}
 A)\quad s_{(\lambda_1, \lambda_2,\dots, \,\lambda_k, \lambda_{k+1},\,\dots, \lambda_l)}= - s_{(\lambda_1, \lambda_2,\dots, \, \lambda_{k+1}-1, \lambda_{k}+1,\,\dots, \lambda_l)}
\end{align}
\begin{align} \label{p3}
B) \quad \text{
If   $\lambda_k- \lambda _m= k-m$, for some $k,m$, then $s_\lambda=0$.}
\end{align}
\end{lemma}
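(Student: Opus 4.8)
The plan is to prove both parts by recognizing the stabilized quantity $s_\lambda = \det S_\lambda$ as a genuine determinant of a (block-triangular, hence honestly defined) matrix and then to argue purely through elementary row and column operations on that matrix, exploiting the defining property (\ref{prop11}).

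For part A), first observe that the lower block $D$ of $S_\lambda$ is built from the entries $h^{(j-1)}_{-i}$ for $i > l$, and this block is unaffected by permuting the first $l$ rows. Thus for the purpose of computing $\det S_\lambda$ (which is the stabilized $N \times N$ determinant for $N$ large) we may work with the matrix $A$, whose $(i,j)$ entry is $h^{(j-1)}_{\lambda_i - i + 1}$. Swapping rows $k$ and $k+1$ of $A$ introduces a sign $-1$; the new matrix has in row $k$ the entries $h^{(j-1)}_{\lambda_{k+1} - k + 1}$ and in row $k+1$ the entries $h^{(j-1)}_{\lambda_k - (k+1) + 1}$. I would then rewrite the index $\lambda_{k+1} - k + 1 = (\lambda_{k+1} - 1) - k + 1$ and $\lambda_k - k = (\lambda_k + 1) - (k+1) + 1$, which shows that after the swap the matrix is exactly $S_{\mu}$ with $\mu = (\lambda_1, \dots, \lambda_{k+1} - 1, \lambda_k + 1, \dots, \lambda_l)$ — because the lower block $D$ of $S_\mu$ is the same as that of $S_\lambda$ (it depends only on $l$, not on the parts), and all other rows are identical. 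One must be slightly careful that passing from one integer vector to another may change which trailing zeros are "active", but since both vectors have the same length $l$ and the stabilization argument already handles the tail, the identity $\det S_\lambda = -\det S_\mu$ follows, which is (\ref{p2}).

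For part B), the cleanest route is to derive it from part A). Suppose $\lambda_k - \lambda_m = k - m$ with, say, $k < m$. Applying the transposition-type move of part A) repeatedly (each application being an adjacent swap with a shift by $\pm 1$), I would bring parts $k$ and $m$ to adjacent positions: after $m - k - 1$ such moves one obtains (up to an overall sign) $\pm s_\nu$ where $\nu$ has in two adjacent slots, say positions $k$ and $k+1$, the values $a$ and $b$ with $a - b$ equal to the original $\lambda_k - (\lambda_m + (m - k - 1))$; a short bookkeeping check using $\lambda_k - \lambda_m = k - m$ shows $a - (b - 1) = 0$ in the relevant normalization, i.e. the two adjacent rows of the matrix become literally equal (the entries $h^{(j-1)}_{a - k + 1}$ and $h^{(j-1)}_{b - (k+1)+1}$ coincide for all $j$). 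A matrix with two equal rows has zero determinant, so $s_\nu = 0$ and hence $s_\lambda = 0$. Alternatively, and perhaps more transparently, one can skip the reduction and directly verify from the index arithmetic that when $\lambda_k - \lambda_m = k - m$ the $k$-th and $m$-th rows of $S_\lambda$ are proportional after applying A)-type shifts; but the adjacent-swap reduction keeps the argument uniform with part A).

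The main obstacle I anticipate is purely organizational rather than conceptual: making the index shifts in part A) line up exactly so that the swapped matrix is recognized as $S_\mu$ for the asserted $\mu$, and keeping track of the overall sign accumulated in part B) when iterating the adjacent swaps to bring parts $k$ and $m$ together. Neither is deep — it is bounded by careful indexing — but it is the place where an off-by-one error would creep in, so I would write out the two-row submatrix explicitly in each case rather than argue in words. Once the identification of swapped matrices with $S_\mu$ is in hand, both statements are immediate consequences of the alternating property of the determinant.
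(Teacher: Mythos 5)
Your proposal is correct and is essentially the paper's own proof, which is just ``change of the order of rows in the determinant $\det S_\lambda$'': the adjacent swap gives A), and for B) the hypothesis $\lambda_k-\lambda_m=k-m$ already makes rows $k$ and $m$ of $S_\lambda$ literally identical (both have entries $h^{(j-1)}_{\lambda_k-k+1}=h^{(j-1)}_{\lambda_m-m+1}$), so no iterated swaps or proportionality argument is needed there. The only blemish is the off-by-one in your displayed intermediate indices: after the swap, row $k$ carries lower index $\lambda_{k+1}-(k+1)+1=\lambda_{k+1}-k=(\lambda_{k+1}-1)-k+1$ and row $k+1$ carries $\lambda_k-k+1=(\lambda_k+1)-(k+1)+1$, which is exactly the identification with $S_\mu$ you assert, so the conclusion stands.
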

\begin{proof}
Change of the order of rows in the determinant  $\det S_\lambda$.
\end{proof}
\begin{remark}The case of  classical symmetric functions correspond to  $h^{(0)}_k=h_k$ -- the  ordinary complete symmetric functions,  and $
h^{(p)}_k=h_{k+p}.$
\end{remark}

\subsection*{Elementary symmetric functions }
There are several ways to define elementary symmetric functions. Traditionally, classical elementary symmetric functions are defined by the formula
\begin{align}\label{elem}
e_k=\sum_{i_1< i_2< \dots< i_k }x_{i_1}x_{i_2}\dots  x_{i_k},\quad e_0=1.
\end{align}
Elementary symmetric functions enjoy several important properties that are equivalent to definition  (\ref{elem}).
For our purposes  it is convenient to  consider the  so-called
{\it Newton's identity} as the  primary  defining property of elementary symmetric functions.
The classical Newton's formula  relates  classical  elementary symmetric functions $e_i$ to   complete  symmetric functions $h_k$:
\begin{align}\label{New_clas}
\sum_{i=0}^{n}(-1)^ie_i h_{n-i}=0 \quad \text{for $n\ge 1$},\quad\quad   e_0h_0=1.
\end{align}
This  identity is also equivalent to the condition that for any $n\in \bZ_{\ge 0}$, the matrix\\ $E=((-1)^{i-j}e_{i-j})_{\{i,j=1,\dots,n\}}$ is the inverse  of  the matrix $H=(h_{i-j})_{\{i,j=1,\dots n\}}$.
The formula (\ref{New_clas}) can be  considered as one of  alternative  definitions of elementary  symmetric functions, since   (\ref{New_clas})  implies that  $e_k$'s are uniquely  defined  polynomials in $h_l$'s  given by
\begin{align}\label{e_def}
e_k= \det[h_{1-i+j}]_{1\le i,j\le k}.
\end{align}
 In our case we  generalize the formula (\ref{e_def})   to become  {\it the  definition} of  elementary symmetric functions.

\begin{definition}\label{def_el}
For $a,p\in \bZ$ define the {\it generalized elementary symmetric function} $e^{(p)}_a$ as follows:
\begin{align*}
e^{(p)}_{a}=\begin{cases}0,\quad  \text {for}\quad  p< a,\\
1,\quad\text {for}\quad  p= a, \\
\det \left[h_{p+1-i}^{(-p+j)}\right]_{1\le i,j\le {p-a}},\quad  \text {for}\quad  p>a.
\end{cases}
\end{align*}
\end{definition}

Thus,  for $p>a$, the polynomial $e^{(p)}_{a}$ is a $(p-a)\times (p-a)$ determinant

\begin{align*}
e^{(p)}_a
=\det \begin{pmatrix}
h_{p}^{(-p+1)} &h^{(-p+2)}_{p} &\dots &h^{(-a-1)}_{p}&h^{(-a)}_{p}\\
1&h^{(-p+2)}_{p-1} &\dots&h^{(-a-1)}_{p-1}&h^{(-a)}_{p-1}\\
\dots&\dots&\dots&\dots\\
0&0&\dots&1&h^{(-a)}_{a+1}
\end{pmatrix}.
\end{align*}
\begin{remark}
Comparing the definition with  formula  (\ref{jt1}), note that
$
e^{(1)}_{-a}= s_{(1^{a+1})}
$, if $a\geq -1$.
Also  $e^{(a+1)}_a= h^{(-a)}_{a+1}$. \end{remark}
\begin{proposition}\label{p:newton}
The elements   $e^{(p)}_{a}$  are  solutions of the following Newton's identity:
\begin{align}\label{Ng}
\sum_{p=-\infty}^{\infty}(-1)^{a-p}  h^{(p)}_{b}e^{(-p)}_{a} =\delta_{a,b} \quad  \text{for any $a,b \in \bZ$.}
\end{align}
\end{proposition}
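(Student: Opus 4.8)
The plan is to read the generalized Newton identity (\ref{Ng}) as the statement that a lower unitriangular array built from the $h^{(r)}_{k}$ and the array built from the $e^{(p)}_{a}$ are inverse to one another; once this is set up, the whole content collapses to Cramer's rule together with an index reshuffling that matches the cofactors of the first array with the determinants defining the $e^{(p)}_{a}$ in Definition~\ref{def_el}. First I would substitute $p\mapsto -q$; since $a+q$ and $a-q$ have the same parity, the claim (\ref{Ng}) becomes
\[
\sum_{q\in\bZ}(-1)^{a-q}\,h^{(-q)}_{b}\,e^{(q)}_{a}=\de_{a,b},\qquad a,b\in\bZ .
\]
By (\ref{prop11}) one has $h^{(-q)}_{b}=0$ unless $q\le b$, and by Definition~\ref{def_el} one has $e^{(q)}_{a}=0$ unless $q\ge a$; hence the sum is finite and supported on $a\le q\le b$, and for $a>b$ it is empty while $\de_{a,b}=0$. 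It remains to treat $a\le b$. For integers $a\le c$ let $T(a,c)$ be the square matrix with rows and columns indexed by $\{a,a+1,\dots,c\}$ and $(b,q)$-entry $h^{(-q)}_{b}$. The ``total index'' of this entry is $b-q$, which is negative exactly above the diagonal and zero on it, so by (\ref{prop11}) the entries strictly above the diagonal vanish while the diagonal entry $h^{(-b)}_{b}$ equals $1$; thus $T(a,c)$ is lower unitriangular, $\det T(a,c)=1$, and $T(a,c)$ is invertible.

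Next I would evaluate the inverse by the adjugate formula. Since the row indexed $a$ sits in position $1$ and the column indexed $c$ in position $c-a+1$, the $(c,a)$-entry of $T(a,c)^{-1}$ equals $(-1)^{c-a}$ times the minor of $T(a,c)$ obtained by deleting row $a$ and column $c$, that is, $(-1)^{c-a}\det\bigl(h^{(-q)}_{b}\bigr)$ taken over $a+1\le b\le c$ and $a\le q\le c-1$ in their natural orders. Reversing simultaneously the order of the rows and the order of the columns of this minor contributes two equal sign factors, which cancel, and turns the entry in position $(i,j)$ into $h^{(-c+j)}_{c+1-i}$ for $1\le i,j\le c-a$ --- precisely the matrix of Definition~\ref{def_el}. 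Hence the minor equals $e^{(c)}_{a}$ (for $c=a$ it is the empty determinant $1=e^{(a)}_{a}$), and $\bigl(T(a,c)^{-1}\bigr)_{c,a}=(-1)^{a-c}e^{(c)}_{a}$. Moreover, since $T(a,c)$ is lower triangular, the block decomposition of its inverse shows that $\bigl(T(a,c)^{-1}\bigr)_{q,a}$ for $a\le q\le c$ is already computed inside the principal submatrix $T(a,q)$, so $\bigl(T(a,c)^{-1}\bigr)_{q,a}=(-1)^{a-q}e^{(q)}_{a}$ for every $a\le q\le c$.

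Then I would simply read off the identity $T(a,b)\,T(a,b)^{-1}=\mathrm{Id}$ at the $(b,a)$-entry:
\[
\de_{a,b}=\sum_{q=a}^{b}\bigl(T(a,b)\bigr)_{b,q}\,\bigl(T(a,b)^{-1}\bigr)_{q,a}=\sum_{q=a}^{b}h^{(-q)}_{b}\,(-1)^{a-q}e^{(q)}_{a},
\]
which, together with the trivial case $a>b$ above, is exactly the rewritten identity; undoing the substitution $q=-p$ returns (\ref{Ng}). I expect the only genuine obstacle to be the bookkeeping in the reshuffling step --- one must check that the minor delivered by Cramer's rule, once its rows and columns are ordered correctly, coincides on the nose with the determinant of Definition~\ref{def_el}, with the two reversal signs cancelling and the Cramer sign $(-1)^{c-a}$ correctly accounted for --- and one should note in passing that all these matrix manipulations make sense because, by (\ref{prop11}), every product of arrays that appears reduces to a locally finite sum.
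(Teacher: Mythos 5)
Your proof is correct. It is organized differently from the paper's: the paper proves (\ref{Ng}) for $b>a$ by Laplace-expanding the determinant that \emph{defines} $e^{(b)}_a$ along its first row and recognizing the complementary minors as $e^{(b-1)}_a, e^{(b-2)}_a,\dots$, which yields the recursion $\sum_{s}(-1)^s h^{(-b+s)}_b e^{(b-s)}_a=0$ in one step (the cases $b<a$ and $b=a$ being immediate), and only afterwards records the matrix reformulation $\mathcal H\mathcal E=\mathrm{Id}$ as a consequence. You make the matrix-inverse picture the engine of the proof instead: you form the unitriangular array $T(a,c)$ of the $h$'s, compute the relevant column of $T(a,c)^{-1}$ by the adjugate formula, identify those cofactors with $e^{(q)}_a$ via the simultaneous row/column reversal (the two reversal signs indeed cancel, and the Cramer sign $(-1)^{c-a}$ matches the alternating sign in (\ref{Ng})), and then read the identity off the $(b,a)$-entry of $T(a,b)\,T(a,b)^{-1}=\mathrm{Id}$ --- in effect an expansion along row $b$ against the cofactors of row $a$. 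Both arguments rest on the same underlying fact, that the $e^{(q)}_a$ are, up to sign, complementary minors of the $h$-array; but your route makes explicit two points the paper's one-line expansion leaves implicit, namely the exact matching of those minors with the determinants of Definition~\ref{def_el} and the reduction to principal submatrices (via block lower-triangularity) needed so that every entry of the inverse column is given by the same formula. The price is a little more bookkeeping; the payoff is that the inverse-matrix statement $\mathcal H\mathcal E=\mathrm{Id}$, which the paper derives after the proposition, is obtained as part of the proof itself.
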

\begin{proof} For $b>a$ expand the determinant  in the definition of  $e^{(b)}_{a}$ by the  first  row:
\begin{align*}
e^{(b)}_a&= h^{(-b+1)}_b e^{(b-1)}_a-h^{(-b+2)}_b e^{(b-2)}_a+\dots  +(-1)^{b-a-1}h^{(-a)}_b e^{(a)}_a,
\end{align*}
which gives $\sum_{s=0}^{b-a}(-1)^{s}h^{(-b+s)}_be^{(b-s)}_a=0$.
Since $h^{(-b+s)}_b=0$ for $s<0$ and $e^{(b-s)}_a=0$ for $s>b-a$,
we can rewrite the last  equality
$$\sum_{p=-\infty}^{\infty}(-1)^{a-p}h^{(p)}_be^{(-p)}_a=0\quad \text{for $b>a$}.$$
It is easy to get the same equality  for $b<a$, since in this case (\ref{Ng}) contains only zero terms. Finally, for $a=b$,
\begin{align*}
\sum_{p=-\infty}^{\infty}(-1)^{a-p}h^{(p)}_ae^{(-p)}_a= h^{(-a)}_ae^{(a)}_a=1.
\end{align*}
 \end{proof}
Let us introduce  the infinite matrices    $\mathcal H$ and
$\mathcal E$ with entries
\begin{align}\label{HE}
 \mathcal H_{bp} = h_{-b}^{(p)},
\quad
\mathcal E_{pa} = (-1)^{a-p}e_{-a}^{(-p)}, \quad (a,b,p\in\bZ).
\end{align}
They can be displayed as follows.
\begin{align*}
\mathcal H
=\begin{pmatrix}
\dots&\dots&\dots&\dots&\dots&\dots&\dots &\dots\\
\dots&h_{2}^{(-1)}&h_{2}^{(0)} &h^{(1)}_{2} &\dots &h^{(a-1)}_{2}&h^{(a)}_{2}&\dots\\
\dots&1&h_{1}^{(0)} &h^{(1)}_{1} &\dots &h^{(a-1)}_{1}&h^{(a)}_{1}&\dots\\
\dots&0&1&h^{(1)}_{0} &\dots&h^{(a-1)}_{0}&h^{(a)}_{0}&\dots\\
\dots&\dots&\dots&\dots&\dots&\dots&\dots\\
\dots&0&0&0&\dots&1&h^{(a)}_{-a+1}&\dots\\
\dots&\dots&\dots&\dots&\dots&\dots&\dots&\dots
\end{pmatrix},
\end{align*}

\begin{align*}
 \mathcal E=
 \begin{pmatrix}
  \dots& \dots& \dots& \dots& \dots &\dots&\dots&\dots\\
      \dots&1&-e_{0}^{(1)}& + e_{-1}^{(1)}&- e_{-2}^{(1)}&\dots& \pm e_{-a}^{(1)}&\dots\\
     \dots&0&1& - e_{-1}^{(0)}&+ e_{-2}^{(0)}&\dots &\pm e_{-a}^{(0)}&\dots\\
      \dots&0&0&1&- e_{-2}^{(-1)}&\dots&\pm  e_{-a}^{(-1)}&\dots\\
       \dots&0&0& 0&1&\dots& e_{-a}^{(-2)}&\dots\\
       \dots& \dots& \dots& \dots& \dots&\dots&\dots&\dots
 \end{pmatrix},
\end{align*}
where the columns are numerated to grow from the left to the right and rows are indexed to increase from the top to the bottom.
Both  matrices are upper-triangular with one's on the diagonal.
Then (\ref{Ng})  can be interpreted as  $\mathcal H\mathcal E=Id$ for infinite  matrices, or, more carefully, for any $M<N$,  for finite  upper-triangular submatrices
$\mathcal H(M,N)= (\mathcal H_{bp})_{(M\le b,p\le N)}$, and $\mathcal E (M,N)= (\mathcal E_{qa})_{(M\le q,a\le N)}$:
\begin{align*}
\sum_{k=M}^{N}\mathcal H_{bk}\mathcal E_{ka}=\sum_{s=0}^{a-b}\mathcal H_{b,b+s}\mathcal E_{b+s,a}=
\sum_{s=0}^{a-b}(-1)^{-s+a-b}h_{-b}^{(b+s)} e_{-a}^{(-b-s)}= (-1)^{a-b}\delta _{-a,-b}=\delta _{a,b}.
\end{align*}
Hence for any $M<N$ one has
$
\mathcal H(M,N)\mathcal E(M,N)= Id.
$

The following lemma is proved in \cite{FH} (Lemma A.42).
\begin{lemma} \label{let}Let $A$ and $B$  be $r\times r$ matrices whose product is a scalar matrix $c\cdot Id$. Let $(S,S^\prime)$ and $(T,T^\prime)$ be permutations of the sequence $(1,\dots, r)$, where  $S$ and $T$ each  consists of $k$ integers, $S^\prime$, $T^\prime$  of $r-k$.  Denote  as $A_{S,T}$ the  corresponding minor (it is the  determinant of the $k\times k$ matrix  whose $i,j$  entry is $a_{s_i, t_j}$ with
$s_i\in S, t_j\in T$).
Then
$$
c^{r-k}A_{S,T}=\varepsilon \det(A) B_{T^\prime, S^\prime},
$$
where $\varepsilon $ is the product of the signs  of the  two permutations.
\end{lemma}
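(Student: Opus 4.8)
The plan is to prove Lemma~\ref{let} by a reduction to the classical Jacobi identity for minors of a matrix and its inverse (equivalently, the Desnanot--Jacobi / complementary minor formula), but organized so that the hypothesis is only that $AB = c\cdot Id$ rather than that $A$ is invertible. First I would dispose of the degenerate case $c=0$: if $c=0$ then $\det(A)\det(B)=0$, and I claim both sides vanish. Indeed $AB=0$ forces $\operatorname{rank}(A)+\operatorname{rank}(B)\le r$, and a short argument on ranks shows that either $\det(A)=0$, which kills the right-hand side, or $A$ is invertible, which forces $B=0$ and then every minor $B_{T',S'}$ of positive size vanishes while the exponent $r-k$ is positive unless $k=r$; the case $k=r$ is trivial since then $A_{S,T}=\varepsilon'\det(A)$ and $B_{T',S'}$ is the empty minor equal to $1$. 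So from now on assume $c\neq 0$, hence $A$ is invertible and $B = c A^{-1}$.

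Next I would normalize away the scalar. Write $B = cA^{-1}$, so $B_{T',S'} = c^{r-k}(A^{-1})_{T',S'}$ because $B$ is obtained from $A^{-1}$ by scaling all $r-k$ rows (or columns) of the relevant $(r-k)\times(r-k)$ submatrix by $c$. Substituting, the claimed identity becomes
\begin{align*}
A_{S,T} = \varepsilon\,\det(A)\,(A^{-1})_{T',S'},
\end{align*}
which is exactly the classical Jacobi identity expressing a minor of $A$ in terms of the complementary minor of $A^{-1}$, up to the sign $\varepsilon$ coming from the two permutations $(S,S')$ and $(T,T')$. Thus the whole statement reduces to this standard fact once the scalar and the degenerate case are handled.

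Then I would prove the classical Jacobi identity itself, so the lemma is self-contained. The cleanest route: let $P$ be the permutation matrix sending the ordered index set $(1,\dots,r)$ to $(S,S')$ and $Q$ the one sending it to $(T,T')$, so that $\varepsilon = \operatorname{sgn}(P)\operatorname{sgn}(Q)$ and $A_{S,T}$ is the top-left $k\times k$ minor of $\tilde A := P^{-1} A Q$ while $(A^{-1})_{T',S'}$ is the bottom-right $(r-k)\times(r-k)$ minor of $\tilde A^{-1} = Q^{-1}A^{-1}P$. Since $\det(\tilde A) = \operatorname{sgn}(P)\operatorname{sgn}(Q)\det(A) = \varepsilon\det(A)$, it suffices to prove the untwisted statement: for an invertible $r\times r$ matrix $M$ written in block form $M = \begin{pmatrix} M_{11} & M_{12}\\ M_{21} & M_{22}\end{pmatrix}$ with $M_{11}$ of size $k\times k$, one has $\det(M_{11}) = \det(M)\det\big((M^{-1})_{22}\big)$, where $(M^{-1})_{22}$ is the bottom-right $(r-k)\times(r-k)$ block of $M^{-1}$. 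This follows from the Schur complement identity: $(M^{-1})_{22} = (M_{22} - M_{21}M_{11}^{-1}M_{12})^{-1}$ when $M_{11}$ is invertible, combined with $\det(M) = \det(M_{11})\det(M_{22}-M_{21}M_{11}^{-1}M_{12})$; multiplying these gives the claim when $M_{11}$ is invertible, and the general case (with $M_{11}$ possibly singular but $M$ invertible) follows by a density/continuity argument, or by perturbing $M_{11} \mapsto M_{11} + t\,Id$ and comparing polynomial identities in $t$.

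The main obstacle is bookkeeping the sign $\varepsilon$ correctly — matching the product of the signs of the two permutations $(S,S')$ and $(T,T')$ to the determinant of the reordered matrix $\tilde A$, and making sure the roles of rows versus columns (and hence which permutation acts on the left and which on the right) are assigned consistently with the minor convention $A_{S,T}$ having row indices in $S$ and column indices in $T$. Everything else is either standard linear algebra or a routine limiting argument; the only real care needed is that the reduction to the Schur-complement computation respects the block decomposition induced by the partition $k + (r-k) = r$ of the index set, and that the empty-minor conventions ($k=0$ or $k=r$) are handled so the formula degenerates correctly.
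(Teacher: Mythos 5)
Your argument is correct. Note, however, that the paper does not prove this lemma at all: it simply quotes it from Fulton--Harris (Lemma A.42), so there is no internal proof to match against. What you give is a complete, self-contained derivation along standard lines: dispose of $c=0$ (where for $k<r$ the factor $c^{r-k}$ kills the left side and either $\det A=0$ or $B=0$ kills the right side, and $k=r$ is the trivial case $A_{S,T}=\varepsilon\det A$), then for $c\neq 0$ write $B=cA^{-1}$, absorb the scalar via $B_{T',S'}=c^{r-k}(A^{-1})_{T',S'}$, and reduce to Jacobi's complementary-minor identity $A_{S,T}=\varepsilon\,\det(A)\,(A^{-1})_{T',S'}$, which you prove by conjugating with permutation matrices (the sign bookkeeping $\det(P^{-1}AQ)=\varepsilon\det A$ is handled correctly, since $\operatorname{sgn}(P^{-1})=\operatorname{sgn}(P)$) and then using the Schur-complement identities $\det M=\det(M_{11})\det(M_{22}-M_{21}M_{11}^{-1}M_{12})$ and $(M^{-1})_{22}=(M_{22}-M_{21}M_{11}^{-1}M_{12})^{-1}$, with a perturbation argument when $M_{11}$ is singular. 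Two cosmetic remarks: the rank inequality invoked in the $c=0$ case is not needed (``$\det A=0$ or $A$ invertible'' is a tautology), and when $\det A=0$ with $k<r$ you should say explicitly that the left side vanishes because $c^{r-k}=0$, which is implicit in your ``both sides vanish'' claim. Compared with the Fulton--Harris proof (which proceeds by a direct determinant manipulation, multiplying a matrix built from rows of $B$ and standard basis vectors against $A$), your route through the invertible case plus Schur complements is arguably more conceptual but needs the continuity/polynomial-identity step; both are standard and equally valid.
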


 \begin{corollary}
 Let $\lambda=(\lambda_1,\dots,\lambda_l)$ be a partition, and let $\mu=(\mu_1,\dots \mu_k)$  be the conjugate partition.
 Then
 \begin{align*}
 s_\lambda= \det[e_{j-\mu_j}^{(i)}]_{1\le i,j\le k}. 
 \end{align*}
 \end{corollary}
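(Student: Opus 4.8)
The plan is to realise the generalized Jacobi--Trudi determinant (\ref{jt1}) as a minor of the infinite matrix $\mathcal H$ of (\ref{HE}), to pass via Lemma \ref{let} and the relation $\mathcal H\mathcal E=Id$ (Proposition \ref{p:newton}) to a complementary minor of $\mathcal E$, and then to recognise that minor --- after peeling off a unitriangular block --- as $\det[e_{j-\mu_j}^{(i)}]_{1\le i,j\le k}$. Throughout we may assume $\lambda\ne 0$ and that all parts of $\lambda$ are positive: dropping a trailing zero changes neither side, since on the left the last row of the Jacobi--Trudi matrix is $(0,\dots,0,1,\ast,\dots)$ by (\ref{prop11}) and expanding along it reduces the size, while the right side does not see trailing zeros of $\lambda$. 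Then $\mu=\lambda'$ has exactly $k=\lambda_1$ parts and $\mu_1=l$.

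First I would set up the minor. Since $\mathcal H_{bp}=h_{-b}^{(p)}$, the $(i,j)$ entry $h_{\lambda_i-i+1}^{(j-1)}$ of (\ref{jt1}) equals $\mathcal H_{b_i,\,j-1}$ with $b_i:=i-1-\lambda_i$, and $b_1<b_2<\cdots<b_l$ because $\lambda$ is a partition; hence, with $S=(b_1,\dots,b_l)$ and $T=(0,1,\dots,l-1)$ both listed increasingly, $s_\lambda=\mathcal H_{S,T}$. Fix an integer $M\le -\lambda_1$ and restrict to the window $\{M,M+1,\dots,l-1\}$ of size $r=l-M$, identified order-preservingly with $\{1,\dots,r\}$. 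By (\ref{prop11}) the truncations $\mathcal H(M,l-1)$ and $\mathcal E(M,l-1)$ are upper triangular with $1$'s on the diagonal, so $\det\mathcal H(M,l-1)=1$ and $\mathcal H(M,l-1)\mathcal E(M,l-1)=Id$. Lemma \ref{let} with $c=1$ then gives $s_\lambda=\mathcal H_{S,T}=\varepsilon\,\mathcal E_{T',S'}$, where $S'$ and $T'$ are the complements of $S$ and $T$ in the window (listed increasingly) and $\varepsilon=\operatorname{sgn}(S,S')\operatorname{sgn}(T,T')$.

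The heart of the argument is to identify $S'$ and the minor $\mathcal E_{T',S'}$; this is where the conjugate partition enters, through the bead correspondence $\bZ\setminus M(\mu)=-1-M(\lambda)$ for the Maya sets $M(\nu):=\{\nu_i-i:i\ge1\}$ (the reflection of the edge sequence of the Young diagram under conjugation). Since $\{b_i:i\ge1\}=-1-M(\lambda)$ meets the window exactly in $S$, one obtains $T'=\{M,\dots,-1\}$ and $S'=\{M,\dots,-k-1\}\cup\{\mu_j-j:1\le j\le k\}$. Because $\mathcal E$ is upper triangular, the block of $\mathcal E_{T',S'}$ with rows in $\{-k,\dots,-1\}$ and columns in $\{M,\dots,-k-1\}$ vanishes, so $\mathcal E_{T',S'}$ is the product of the unitriangular minor on rows and columns $\{M,\dots,-k-1\}$ (determinant $1$) with the $k\times k$ minor on rows $\{-k,\dots,-1\}$ and columns $\{\mu_k-k,\dots,\mu_1-1\}$; reversing both orderings is sign-neutral and turns the latter into $\det[\mathcal E_{-i,\,\mu_j-j}]_{1\le i,j\le k}$. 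Finally $\mathcal E_{-i,\mu_j-j}=(-1)^{\mu_j-j+i}e_{j-\mu_j}^{(i)}$, so pulling $(-1)^i$ out of row $i$ and $(-1)^{\mu_j-j}$ out of column $j$ and using $|\mu|=|\lambda|$ yields $\mathcal E_{T',S'}=(-1)^{|\lambda|}\det[e_{j-\mu_j}^{(i)}]_{1\le i,j\le k}$.

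It remains to compute $\varepsilon$. For $X\subseteq\{1,\dots,r\}$ listed increasingly, the permutation $(X,X^c)$ has sign $(-1)^{\,\sigma(X)-\binom{|X|+1}{2}}$, where $\sigma$ denotes the sum of the indices; since $|S|=|T|=l$, the contributions of the relabeling shift (an even multiple of $l$) and of the binomial terms cancel modulo $2$, so $\varepsilon=(-1)^{\sigma(S)+\sigma(T)}$ computed with the original integers. Here $\sigma(T)=\binom{l}{2}$ and $\sigma(S)=\sum_{i=1}^{l}(i-1-\lambda_i)=\binom{l}{2}-|\lambda|$, whence $\varepsilon=(-1)^{l(l-1)-|\lambda|}=(-1)^{|\lambda|}$. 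Substituting into $s_\lambda=\varepsilon\,\mathcal E_{T',S'}$, the two factors $(-1)^{|\lambda|}$ cancel and $s_\lambda=\det[e_{j-\mu_j}^{(i)}]_{1\le i,j\le k}$. The main obstacle is the third paragraph: correctly reading the complement $S'$ off the conjugate partition via the Maya reflection, and then keeping straight the two independent sign computations --- the $(-1)^{|\lambda|}$ coming from the entries of $\mathcal E$ and the $\varepsilon=(-1)^{|\lambda|}$ coming from Lemma \ref{let} --- so that they cancel exactly; once those are settled, everything else is just the unitriangularity of $\mathcal H$ and $\mathcal E$.
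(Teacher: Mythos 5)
Your proof is correct and follows essentially the same route as the paper: realize $s_\lambda$ as the minor $\mathcal H_{S,T}$, invoke Lemma \ref{let} together with $\mathcal H\mathcal E=Id$, identify the complementary column set via the conjugate partition, and check that the permutation sign $(-1)^{|\lambda|}$ cancels the sign $(-1)^{|\mu|}$ extracted from the entries of $\mathcal E$. The only differences are cosmetic: you allow a window $M\le-\lambda_1$ (forcing you to peel off an extra unitriangular block, which the paper avoids by taking $M=-k$ exactly) and you spell out the sign and trailing-zero bookkeeping that the paper leaves implicit.
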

 \begin{proof}
From (\ref{jt1}), $s_\lambda$  is the minor $\mathcal H(-k,l-1)_{S,T}$ of the matrix   $\mathcal H(-k,l-1)$ with
 $S=\{-\lambda_1,\dots,-\lambda_l+l-1\}$ and $T=\{ 0,1,\dots, l-1\}$. Then
 $S^\prime=\{\mu_1-1, \dots, \mu_k-k\}$,  $T^\prime=\{-k,\dots, -1\}$, and
 $ 
 S\sqcup S^\prime = T\sqcup T^\prime=\{-k,\dots,l-1\}$, with
$  \varepsilon= (-1)^{\sum\mu_j}= (-1)^{\sum\lambda_j}.
$
 By Lemma \ref {let} it follows that
\begin{align*}
s_\lambda&=\det \mathcal H(-k,l-1)_{S,T}=\varepsilon  \det \mathcal E(-k,l-1)_{T^\prime,S^\prime}=\varepsilon  \det[(-1)^{\mu_j-j-i}e_{j-\mu_j}^{(i)}]_{1\le i,j\le k}\\
&=\varepsilon  (-1)^{\sum_j (\mu_j-j)-\sum_i i}\det[e_{j-\mu_j}^{(i)}]_{1\le i,j\le k}=\det[e_{j-\mu_j}^{(i)}]_{1\le i,j\le k}.
\end{align*}
 \end{proof}

\subsection*{General Giambelli identities}
Recall that sometimes the linear   basis of the fermionic Fock space is matched with the linear basis of symmetric  functions not through the Jacobi-Trudi identity, but through another (equivalent)  combinatorial  property of classical  Schur functions, the  so-called Giambelli  identity (see e.g.  (1.18) in \cite{JM} or (2.10) in \cite{C1}). Here we prove the analogue of  Giambelli identity  generalized  symmetric functions defined by (\ref{jt1}).

 For any $m, n\in \mathbb Z$, we define the hook Schur function $s_{(m|n)}$
by
\begin{equation}\label{e:shook}
s_{(m|n)}=\sum_{p=0}^n(-1)^ph_{m+1}^{(p)}e_{-n}^{(-p)}.
\end{equation}
If $m\geq 0$ and $n\geq 0$, $s_{(m|n)}$ is exactly the generalized Schur function $s_{(m+1,1^n)}$
of the hook $(m+1, 1^n)$ according to (\ref{jt1}). This can be easily seen by expanding the
Jacobi-Trudi determinant along the first row and observe that
the $(1, p+1)$-minor is exactly the elementary symmetric function
$e_{-n}^{(-p-1)}$ by Definition \ref{def_el}, for $p=0, 1, \cdots, n$.

Clearly by Newton identity (\ref{Ng}),
$s_{(m|n)}=0$ if either $m<0$ or $n<0$, except that $s_{(m|n)}=(-1)^n$ when $m+n=-1$.

Recall that the Frobenius notation $(\al_1\cdots \al_r|\bet_1\cdots \bet_r)=(\al|\bet)$ of the partition $\lambda$ is defined by
\begin{align*}
\al_i&=\lambda_i-i,\\
\bet_i&=\lambda_i'-i,
\end{align*}
for $i=1, \cdots, r$, where $r$ is the length of the main diagonal in $\lambda$. Then  the conjugate of $(\al|\bet)$ is $(\bet|\al)$, and the hook $(m+1,1^n)$ is $(m|n)$ in Frobenius notation.
Sometimes it is convenient to allow $i>r$ and still use the formula to extend the Frobenius notation.
For example, $\lambda=(3\;2^3)=(2\;0|3\;2)$ in Frobenius notation
and $\la=(2,0,-1,-2,\cdots|3,2,1,0,-1,\cdots)$ or any cut-off beyond $r$ in the extended Frobenius notation.

\begin{theorem} For any partition $\la=(\al|\bet)=(\al_1\cdots \al_r|\bet_1\cdots \bet_r)$
and any $n\geq l(\lambda)$, the generalized Schur function $s_{(\al|\bet)}$ satisfies
\begin{equation}\label{e:giambelli}
s_{(\al|\bet)}=\det[s_{(\la_i-i|n-j)}]_{1\leq i, j\leq n}=\det[s_{(\al_i|\bet_j)}]_{1\leq i, j\leq r}.
\end{equation}
\end{theorem}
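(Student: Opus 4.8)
The plan is to prove the two claimed equalities separately: first $s_{(\al|\bet)}=s_\lambda=\det[s_{(\la_i-i\,|\,n-j)}]_{1\le i,j\le n}$ via a matrix factorization, and then $\det[s_{(\la_i-i\,|\,n-j)}]_{1\le i,j\le n}=\det[s_{(\al_i|\bet_j)}]_{1\le i,j\le r}$ by collapsing the size-$n$ determinant to size $r$.

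For the first equality I would write the matrix $[s_{(\la_i-i\,|\,n-j)}]_{1\le i,j\le n}$ as a product of two $n\times n$ matrices. Using the definition (\ref{e:shook}) together with the fact that $e_{-m}^{(-p)}=0$ for $p>m$ (Definition \ref{def_el}), every entry equals $s_{(\la_i-i\,|\,n-j)}=\sum_{p=0}^{n-1}(-1)^p h_{\la_i-i+1}^{(p)}e_{j-n}^{(-p)}$, since all terms with $p\ge n$ vanish (as $j\ge 1$). Hence $[s_{(\la_i-i\,|\,n-j)}]=\widetilde H\,\widetilde E$ with $\widetilde H_{ij}=h_{\la_i-i+1}^{(j-1)}$ and $\widetilde E_{ij}=(-1)^{i-1}e_{j-n}^{(-(i-1))}$. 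Now $\det\widetilde H=s_\lambda$: for $n\ge l(\lambda)$ this is exactly the stabilized Jacobi--Trudi determinant (\ref{jt1}), the rows below the $l(\lambda)$-th forming a unit upper-triangular block by (\ref{prop11}). And $\det\widetilde E=1$: by Definition \ref{def_el} the $(i,j)$ entry vanishes for $j>n-i+1$ and equals $(-1)^{i-1}$ for $j=n-i+1$, so $\widetilde E$ is lower anti-triangular with anti-diagonal entries $(-1)^{i-1}$, giving $\det\widetilde E=(-1)^{\binom n2}\prod_{i=1}^n(-1)^{i-1}=1$. Multiplying the determinants yields $s_\lambda=s_{(\al|\bet)}$.

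For the second equality I would expand the size-$n$ determinant by Laplace along its last $n-r$ rows. In extended Frobenius notation $\al_i=\la_i-i$ one has $\al_i\le -1$ for $i>r$, so by the vanishing recorded after (\ref{e:shook}) ($s_{(m|k)}=0$ for $m<0$ unless $m+k=-1$, in which case it equals $(-1)^k$) the $i$-th row of $[s_{(\al_i|n-j)}]_{1\le i,j\le n}$ has a single nonzero entry: $s_{(\al_i|n-j)}=(-1)^{\al_i+1}$ at the column $j=n+\al_i+1$, which lies in $\{1,\dots,n\}$ because $-n\le\al_i\le -1$ (from $0\le\la_i$ and $i\le n$). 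Since the $\al_i$ strictly decrease, these columns are distinct, so exactly one term of the Laplace expansion survives; its complementary minor sits in rows $1,\dots,r$ and in the columns $\{1,\dots,n\}\setminus\{n+\al_i+1:r<i\le n\}$. Identifying this column set is the combinatorial heart: it rests on the Maya-diagram fact that $\{\la_i-i\}_{i\ge 1}$ and $\{-1-(\la'_j-j)\}_{j\ge 1}$ partition $\bZ$, which after restriction to the relevant window $\{-n,\dots,-1\}$ of $\al$-values shows the surviving columns are precisely $\{n-\bet_j:1\le j\le r\}$; hence the complementary minor is $\det[s_{(\al_i|\bet_j)}]_{1\le i,j\le r}$. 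It then remains to verify that the overall sign is $+1$: assembling the Laplace sign, the entry signs $(-1)^{\al_i+1}$, and the reversal permutations that put the selected rows and columns in increasing order, the total exponent reduces modulo $2$ to $\sum_{i=r+1}^n(n-i)+\binom{n-r}2=2\binom{n-r}2\equiv 0$.

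The step I expect to be the main obstacle is this last sign verification in the second equality, together with the accompanying identification of the complementary columns with $\{n-\bet_j\}$ through the beta-set description of $\lambda$; one must track the Laplace sign, the signs of the individual hook functions, and the column-ordering permutation against one consistent convention and check that they cancel. Everything in the first step — spotting the factorization, $\det\widetilde H=s_\lambda$ from Jacobi--Trudi, and the anti-triangular computation $\det\widetilde E=1$ — is then routine.
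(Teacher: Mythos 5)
Your proposal is correct and follows essentially the same route as the paper: the same factorization $[s_{(\la_i-i|n-j)}]=[h_{\la_i-i+1}^{(j-1)}][(-1)^{i-1}e_{-n+j}^{(-i+1)}]$ with the unit-determinant anti-triangular factor for the first equality, and the same collapse of the $n\times n$ determinant using the single nonzero entries in rows $i>r$ together with the complementarity $\{n+1+\al_i\}\sqcup\{n-\bet_j\}$ for the second. Your stated sign reduction $\sum_{i=r+1}^n(n-i)+\binom{n-r}{2}\equiv 0 \pmod 2$ does check out, matching the paper's computation $(-1)^{(n-r)(n+r+1)}=1$.
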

\begin{proof}
 Let $(\al|\bet)=\lambda=(\lambda_1,\cdots, \lambda_l)$. For any $n\geq l(\la)$, consider the matrix
 \begin{equation*}
 [(-1)^{i-1}e_{-n+j}^{(-i+1)}]_{1\leq i, j\leq n}.
 \end{equation*}
  As $e_{-n+j}^{(-i+1)}=0$ for $i+j>n+1$, the
matrix has determinant $1$ regardless of the parity of $n$. By the definition of the hook Schur function (\ref{e:shook}) it follows that
\begin{equation}
s_{(\la_i-i|n-j)}=\sum_{p=0}^{n-1}(-1)^{p}h_{\la_i+1-i}^{(p)}e_{-n+j}^{(-p)}.
\end{equation}
This implies the following matrix identity in $Mat_{n}(B)$:
\begin{equation}\label{e:giambelli2}
[s_{(\la_i-i|n-j)}]=[h_{\la_i-i+1}^{(j-1)}][(-1)^{i-1}e_{-n+j}^{(-i+1)}].
\end{equation}
Taking the determinant of (\ref{e:giambelli2}) we get that
$$s_{\la}=\det[s_{(\la_i-i|n-j)}]_{1\leq i, j\leq n}.$$

For each $i>r$, the $i$th row has only one non-zero entry $s_{(\la_i-i|n-j)}=(-1)^{n-j}$ at the column $j=n+1+\al_i$.
It is well-known that $\{n+1+\al_i\}_{1\leq i\leq n}\sqcup\{n-\bet_j\}_{1\leq j\leq m}=
\{1, 2\cdots, m+n\}$ for any $m\geq \la_1$ (cf. \cite{Md}, I, (1.7)), i.e. $\{-1-\al_i\}_{1\leq i\leq n}\sqcup\{\bet_j\}_{1\leq j\leq m}=
\{-m, \cdots, n-1\}$. So when we remove the last $n-r$ rows and the $n-r$ columns
numbered by $n+1+\al_{r+1}, \cdots, n+1+\al_n$, the supplement
$r\times r$-minor of $[s_{(\la_i-i|n-j)}]$ is exactly $\det[s_{(\al_i|\bet_j)}]_{r\times r}$.
Therefore $s_{\lambda}=\pm \det[s_{(\al_i|\bet_j)}]$. The overall sign factor
is given by
$$
(-1)^{\sum_{i=r+1}^n (n-j_i)+((i+j_i)-(i-r-1))}=(-1)^{\sum_{i=r+1}^n(n+r+1)}=(-1)^{(n-r)(n+r+1)}=1,
$$
which shows the Giambelli identity.
\end{proof}
\begin{remark}
It is clear that the Jacobi\,-\,Trudi identities for both $h_n^{(i)}$ and $e_{-n}^{(i)}$ are
special cases of the Giambelli identity.

For example, let  $\la=(3\;2^3)=(2\;0|3\;2)$, so $r=2$. Take $n=4$ and
the extended Giambelli determinant is
$$\det[s_{(\al_i|4-j)}]=det\begin{bmatrix} s_{(2|3)} & s_{(2|2)}& s_{(2|1)}& s_{(2|0)}\\
s_{(0|3)} & s_{(0|2)}& s_{(0|1)}& s_{(0|0)}\\
0 & 0 & 0&1\\
0 &0& -1& 0
\end{bmatrix}=\det[s_{(\al_i|\bet_j)}]_{2\times 2}.$$

\end{remark}

\section{Clifford algebra action  and Vertex Operator Presentation.} 
  In accordance with  the classical case, to construct the action of  the Clifford algebra on the boson space, we need to take
  multiple  copies of the  polynomial  ring  $B$. Namely, let $z$ be a variable,
  set  $\Bo^{(m)}=z^m B$ and $\Bo=\oplus\Bo^{(m)}= \bC[z, z^{-1}, h^{(0)}_1, h^{(0)}_2, \dots]$.
  The elements  $\{s_\lambda z^m\}$  form a linear basis of $\Bo$, where   $s_\lambda $ are defined
  by (\ref{jt1}) and are labeled by  partitions  $\lambda$, and  $m\in \bZ$. 

Define the  operators $\psi_{k}$ and $\psi^*_{k}$   ($k\in \bZ$) acting on this basis  by the following rules:
\begin{align}
\psi_{k} (s_\lambda z^m) &= s_{(k-m-1,\lambda)} z^{m+1},\label{p10}\\ 
\psi^*_k (s_\lambda z^m)&=\sum_{t=1}^{\infty}(-1)^{t+1}\delta_{k-m-1,\,\lambda_t-t}\,  s_{(\lambda_1+1,\dots, \lambda_{t-1}+1, \lambda_{t+1},\lambda_{t+2}\dots) } z^{m-1}.  \label{p100} 
\end{align}
Note that in the sum (\ref{p100}) only one term survives by the property (\ref{p3}) of $s_\lambda $. 
 A direct check  on the basis elements $\{s_\lambda z^m\}$ immediately proves  the following  proposition:
\begin{proposition}\label{prop1}
(1)  The action of  ${\psi_k, \psi^*_k}$  satisfies  the commutation relations of the Clifford algebra of fermions:
\begin{align*}
\psi_{k} \psi_{l}+  \psi_{l} \psi_{k}= 0,
 \quad \psi^*_{k}\psi^*_{l}+  \psi^*_{l} {\psi^*}_{k}= 0,
\quad \psi_{k}\psi^*_{l}+  \psi^*_{l} \psi_{k}= \delta_{k,l}.
\end{align*}
 (2) Let $\lambda=(\lambda_1\ge \lambda_2\ge\dots \ge \lambda_l)$ be a partition $\lambda\vdash|\lambda| $, and let  $\mu=(\mu_1\ge \mu_2\ge\dots \ge \mu_k)$ be the conjugate of the partition $\lambda$. Then
\begin{align}
&\psi_{\lambda_{l}+l}\dots \psi_{\lambda_{2}+2}\psi_{\lambda_{1}+1}  (1)= s_\lambda z^l,\label{bers1}
\\
&\psi^*_{-\lambda_{1}-l+1}\dots \psi^*_{-\lambda_{l-1}-1} \psi^*_{-\lambda_{l}}  (1)= (-1)^{|\lambda|}s_\mu z^{-l}.\label{bers2}
\end{align}
\end{proposition}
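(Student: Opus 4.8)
The plan is to verify all of the asserted identities by a direct computation on the linear basis $\{s_\lambda z^m\}$ of $\Bo$, exactly as announced just before the statement. Everything is powered by Lemma~3.1: the antisymmetry rule (\ref{p2}), which turns an adjacent transposition of two entries of $\lambda$ into a sign together with a shift, and the vanishing rule (\ref{p3}), which is what forces the sums in (\ref{p100}) to collapse to a single term (here it is essential that $\lambda$ be a partition, so that the integers $\lambda_t-t$ are pairwise distinct). For part~(2) it is also convenient to keep at hand the elementary-symmetric form of the Jacobi--Trudi determinant proved in the Corollary above, namely $s_\lambda=\det[e^{(i)}_{j-\mu_j}]$ with $\mu=\lambda'$.

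For part~(1), the relation $\psi_k\psi_l+\psi_l\psi_k=0$ is immediate: applying it to $s_\lambda z^m$ and using (\ref{p10}) twice gives
$$(\psi_k\psi_l+\psi_l\psi_k)(s_\lambda z^m)=\bigl(s_{(k-m-2,\,l-m-1,\,\lambda)}+s_{(l-m-2,\,k-m-1,\,\lambda)}\bigr)z^{m+2},$$
and (\ref{p2}) applied to the first two slots rewrites the first summand as the negative of the second. For $\psi_k\psi^*_l+\psi^*_l\psi_k=\delta_{k,l}$ I would expand both compositions on $s_\lambda z^m$ using (\ref{p10}) and (\ref{p100}): in $\psi^*_l\psi_k$ the operator $\psi_k$ first prepends a part, so the summation index $t$ of (\ref{p100}) now runs over the new slot $t=1$ and over the shifted old slots; the $t=1$ term forces $k=l$ and returns $s_\lambda z^m$, while the shifted old slots reproduce term-for-term the summands of $\psi_k\psi^*_l(s_\lambda z^m)$ but with the opposite sign, because relabelling $t\mapsto t-1$ flips the parity factor $(-1)^{t+1}$; the bulk parts cancel and only $\delta_{k,l}s_\lambda z^m$ survives. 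For $\psi^*_k\psi^*_l+\psi^*_l\psi^*_k=0$ one invokes (\ref{p3}) twice: each of $\psi^*_k\psi^*_l(s_\lambda z^m)$ and $\psi^*_l\psi^*_k(s_\lambda z^m)$ is either $0$ or a single $s_\nu z^{m-2}$, with $\nu$ obtained from $\lambda$ by deleting the two slots at which $\lambda_t-t$ equals $k-m-1$ and $l-m-1$ and incrementing the slots above them; the two orders produce the \emph{same} $\nu$ but carry out the two deletions in opposite order, and a short check of the two sign factors $(-1)^{t+1}$ shows the relative sign is $-1$.

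For part~(2) both formulas are proved by induction on $l$, i.e.\ on the number of operators. In (\ref{bers1}) one starts from $1=s_\emptyset z^0$ and applies (\ref{p10}) once per factor; the shifts in (\ref{bers1}) are arranged precisely so that each application of (\ref{p10}) prepends the next part of $\lambda$ and raises the $z$-degree by one, so the product telescopes to $s_\lambda z^l$ (if one processes the parts in a different order, Lemma~3.1 is used to return the intermediate integer vectors to standard form). In (\ref{bers2}) one again starts from $1$ and applies (\ref{p100}) once per factor; the first operator produces $\psi^*_{-\lambda_l}(1)=(-1)^{\lambda_l}s_{(1^{\lambda_l})}z^{-1}$, a single column, and at every later stage (\ref{p3}) guarantees the sum is a single term, so a new column is prepended; after all $l$ steps the diagram built up is precisely that of the conjugate $\mu=\lambda'$, the accumulated sign factors multiply to $(-1)^{\lambda_1+\cdots+\lambda_l}=(-1)^{|\lambda|}$, and the determinant assembled is the $e$-form Jacobi--Trudi determinant of $\lambda'$, which is $s_\mu$ by the Corollary.

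The main obstacle throughout is the sign bookkeeping for the $\psi^*$'s: in $\psi^*_k\psi^*_l+\psi^*_l\psi^*_k=0$ one must confirm that interchanging the order of the two row-deletions flips the sign, and in (\ref{bers2}) one must confirm both that the successive $(-1)^{t+1}$ factors assemble into $(-1)^{|\lambda|}$ and that the determinant built up is genuinely the Jacobi--Trudi determinant of $\lambda'$ rather than a permuted variant of it. The one conceptual input that makes all of these ``direct checks'' finite is the strict monotonicity of $\lambda_t-t$, which is exactly where the hypothesis that $\lambda$ is a partition enters.
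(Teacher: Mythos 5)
Your verification of part (1) is correct and is exactly the ``direct check on the basis elements'' that the paper invokes without details: the $\psi\psi$ relation via (\ref{p2}), the cancellation of the shifted slots against $\psi_k\psi^*_l$ in the mixed relation, and the opposite-order double deletion for the $\psi^*\psi^*$ relation all go through as you describe. One small point you should make explicit: the operators are \emph{defined} only on the basis $\{s_\lambda z^m\}$ with $\lambda$ a partition, while in the mixed relation you apply (\ref{p100}) directly to the non-partition vector $(k-m-1,\lambda)$; this is legitimate, but it requires the (easy, yet unstated) check that formula (\ref{p100}), read on arbitrary integer vectors, is compatible with the straightening rules of Lemma~3.1. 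Your treatment of (\ref{bers2}) is also sound: at each stage exactly one term survives, the surviving index is $t=\lambda_{l-j}+1$, the signs accumulate to $(-1)^{|\lambda|}$, and the vector built up is literally the conjugate partition $\mu$, so the result is $s_\mu z^{-l}$ by (\ref{jt1}) itself --- the appeal to the Corollary is unnecessary.

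The genuine gap is in your argument for (\ref{bers1}). Reading the product with the rightmost factor acting first, the factor $\psi_{\lambda_j+j}$ acts at charge $j-1$ and prepends $\lambda_j+j-(j-1)-1=\lambda_j$; so the telescoping you describe produces $s_{(\lambda_l,\lambda_{l-1},\dots,\lambda_1)}z^l$, the \emph{reversed} vector, not $s_\lambda z^l$, and Lemma~3.1 does not ``return it to standard form'' as $s_\lambda$: for $\lambda=(2,1)$ the product is $\psi_3\psi_3(1)=0$ by your own part (1), while $s_{(2,1)}z^2\neq 0$, and for $\lambda=(3,1)$ one gets $\psi_3\psi_4(1)=s_{(1,3)}z^2=-s_{(2,2)}z^2$. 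In other words, the shifts in the displayed formula are \emph{not} arranged so that the parts assemble into $\lambda$; the identity your telescoping argument actually proves is $\psi_{\lambda_1+l}\,\psi_{\lambda_2+l-1}\cdots\psi_{\lambda_l+1}(1)=s_\lambda z^l$, i.e.\ with $\lambda_j$ paired with $l-j+1$, so that the $\lambda_1$-operator is applied last --- which is also what consistency with (\ref{bers2}) and with the classical Bernstein/wedge picture demands. So you must either carry out the prepending in this order (and say so, correcting the index pairing in the displayed statement, which appears to be a misprint), or concede that the formula as printed is not what your computation yields; as written, your proof of (\ref{bers1}) does not establish the stated identity.
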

\begin{remark}
 Following \cite{Zel}, where (\ref{bers1}) was stated for classical symmetric functions,  sometimes
 presentation of this kind  are called
   ``Bernstein  (vertex) operators presentation''.  Vertex operators  and  the  presentation  of classical symmetric functions (\ref{bers1}), (\ref{bers2})
were constructed in  \cite{J}.
\end{remark}

Our next goal is to combine the operators $\psi_k, \psi^*_k$ into generating functions and write them in the form  of ``vertex operators''.
 Set
  \begin{align}\label{Gamma}
\Psi(u,m)=\sum_{k\in \bZ}\psi_{k}\,\vert_{B^m}u^{k},\quad 
\Psi^*(u,m)=\sum_{k\in \bZ}\psi^*_{k}\,|_{B^m}u^{-k}.
\end{align}

\subsection*{ Vertex operators for classical symmetric functions}
First, let us review   the combinatorial  description of  operators  (\ref{Gamma}) in the classical case.
Recall  (see e.g. I. 5, \cite{Md}) that the ring of symmetric functions  possess  a scalar product with  classical   Schur  functions $s_\lambda$  as an orthonormal basis:
              $\<s_\lambda,s_\mu\>=\delta_{\lambda,\mu}$. Then for any symmetric function $f$ one can define an adjoint operator  acting on the ring of symmetric functions  by the  standard rule:
        $
        \<D_f g, w\>= \<g, fw\>
        $
        for any symmetric functions  $g, f,w$.
 Using  notation
        $D_\lambda:= D_{s_\lambda}$, one gets
\begin{align*}
        \< D_\lambda s_\mu,s_\nu\>= \< s_\mu,s_\lambda s_\nu\>,\quad\text{hence}\quad   D_\lambda s_\mu= s_{\mu/\lambda},
\end{align*}
where $s_{\mu/\lambda}$  is the skew-Schur function of shape $\mu/\la$.  This  symmetric function can be  expressed as a determinant
\begin{align}
s_{\mu/\lambda}=\det [h_{\mu_i-\lambda_j-i+j}]_{1\le i,j\le n}.\label{skew}
\end{align}
Introduce    generating functions for  $e_k$, $h_k$  and for corresponding adjoint operators:
\begin{align*}
E(u)= \sum_{k\ge 0} {e_k} u^k,\quad H(u)= \sum_{k\ge 0} {h_k} u^k,
\\
  DE(u)= \sum_{k\ge 0} D_{e_k} u^k,\quad DH(u)= \sum_{k\ge 0} D_{h_k} u^k.
  \end{align*}
     Then it is known that for classical  symmetric functions
\begin{align}
    \Psi(u,m)&= u^{m+1}z \,H(u) DE\left(\frac{-1}{u}\right),\label{psi1}\\
       \Psi ^*(u,m)&= u^{{-m}}z^{-1} \, E(-u) DH\left(\frac{1}{u}\right).\label{psi11}
\end{align}
Note that  very often  these formulas  are  written through power sums  (see e.g. \cite{Bom}, Lecture 5). Namely, introduce the (normalized) classical power sums, which are   symmetric functions  of the form
\begin{align}\label{class}
p_k=\frac{1}{k}\sum_{i} x_{i}^k.
\end{align}
Then
 $H(u)=\exp(\sum_{k\ge 1}p_ku^k)$ 
 and
$E(u)=\exp(-\sum_{k\ge 1} p_k (-u)^k)$, since $H(u)E(-u)=1$.

Any  symmetric function  $f$ can  be expressed as a polynomial   $ f=\varphi(p_1,2p_2, 3p_3,\dots)$ in (normalized) power sums. Then one has
$D_f=\varphi ({\partial_{ p_1}},  {\partial_ {p_2}}, {\partial_{ p_3}},\dots)$.
 (See e.g. \cite {Md}, I.5, example 3).
Hence
$$
DH(u)=\exp\left(\sum_k {\frac {\partial_{ p_k}}{k}}u^k\right), \quad  DE(u)= \exp\left(-\sum_k \frac{\partial_{ p_k}}{k}(-u)^k\right),
$$
and we can write
\begin{align}
     \Psi(u,m)= u^{m+1}z\exp \left(\sum_{j\ge 1}p_j u^j  \right) \exp \left(-\sum_{j\ge 1}\frac{ \partial_{p_j}}{j} {u^{-j}}\right),
\label{psi2}\\
     \Psi^*(u,m)= u^{-m}z^{-1} \exp \left(-\sum_{j\ge 1} {p_j} u^j  \right) \exp \left(\sum_{j\ge 1}\frac{\partial_{p_j}}{j}  {u^{-j}}\right).
\label{psi21}
\end{align}
Note that the formulas  for vertex operators has the  form of  decomposition into a  product of two generating functions,  which  separate the action of   differentiations, and of  multiplication operators. This decomposition is important for  further  applications of  vertex operators.

\subsection*{Operators $\psi_{k},\psi^*_k$ expressed through $D_{p}$ and $D^{(p)}$}
 Our  next goal is  to   express operators  $\psi_k, \psi^*_k$ through  analogues of  $e^{(p)}_k$, $h^{(p)}_k$ $D_{e_k}$, $D_{h_k}$.
In some examples we will be  able to go further and  write nice formulas  in the spirit of  (\ref{psi1}) and (\ref{psi11}) for generating functions $\Psi (u,m), \Psi^*(u,m)$.

Motivated by  definition (\ref{skew}) of classical   skew-symmetric functions  through determinants, we introduce two operators on $B$, which are generalizations of
$D_{h_p}$ and $D_{e_p}$.
Define
\begin{align}\label{Dp}
D_{p} (s_{\lambda_1,\dots, \lambda_l}):=
\begin{cases}
\det [h^{(j)}_{\lambda_i-i}]_{(j=0,1,\dots, \hat p, \dots l,\quad  i=1,\dots,l)}, & \text{if} \quad  0\le p\le l,\\ 
 0,&\text{otherwise}.
\end {cases}
\end{align}
The  following interpretation of this definition   will be useful. For $0\le p\le l$, one can write
\begin{align}\label{DP1}
D_{p} (s_{\lambda_1,\dots, \lambda_l})=
\det
\begin{pmatrix}
h_{\lambda_1-1}^{(0)} &\dots &h^{(p-1)}_{\lambda_1-1} &h^{(p+1)}_{\lambda_1-1} &\dots&h^{(l)}_{\lambda_1-1}&| &\dots\\
h_{\lambda_2-2}^{(0)} &\dots  &h^{(p-1)}_{\lambda_2-2} &h^{(p+1)}_{\lambda_2-2}&\dots &h^{(l)}_{\lambda_2-2}&| &\dots\\
\dots &\dots&\dots&\dots&\dots&\dots&|&\dots\\
h_{\lambda_l-l} ^{(0)}&\dots &h^{(p-1)}_{\lambda_l-l}&h^{(p+1)}_{\lambda_l-l} &\dots&h^{(l)}_{\lambda_l-l}&| &\dots\\
\\
\hline\\
h_{-l-1}^{(0)} &\dots &h^{(p-1)}_{-l-1} &h^{(p+1)}_{-l-1}&\dots &h^{(l)}_{-l-1} &|&\dots\\
h_{-l-2}^{(0)} &\dots &h^{(p-1)}_{-l-2}  &h^{(p+1)}_{-l-2}&\dots &h^{(l)}_{-l-2} &|&\dots\\
\dots &\dots&\dots&\dots&\dots&\dots&|&\dots
\end{pmatrix}.
\end{align}
The matrix  above is obtained from the matrix  $S_\lambda$ by   subtracting  $1$ from  all  the lower indices in the entries of  $S_\lambda$ and  deleting  the $p$-th  column.
Note that the  resulting matrix has the form
$$
\begin{pmatrix}
A&B\\
0 &D
\end{pmatrix},
$$
where $A$ is $(l-1)\times (l-1)$ matrix,  and  $D$  is upper-triangular matrix with $1$'s  on the diagonal.
Note that this  interpretation naturally  extends to the case  $p>l$, where $D$  block  would have several zero's on the diagonal.
We can define a sequence of polynomials in $h_k^{(a)}$ -- the determinants of $N\times N$  left  upper  part of that matrix. This sequence  stabilizes for big enough $N$, and we conclude that
 presented by (\ref{DP1}) determinant  $D_{p} (s_{\lambda_1,\dots, \lambda_l})$ is a well-defined polynomial in $h^{(a)}_k$. Note that $D_0(1)=1$ and $D_p(1)=0$ for $p\ne 0$.
Operator $D_{p}$ is  the combinatorial analogue of the classical  operator $D_{e_p}$. 

Define
\begin{align}\label{Dpu}
D^{(p)}(s_{\lambda_1,\dots, \lambda_l })=\sum_{t=1}^{\infty} (-1)^{t+1}h^{(p)}_{\lambda_t-t+2} s_{(\lambda_1+1,\dots, \lambda_{t-1}+1, \lambda_{t+1},\dots ,\lambda_l,0,0,\dots)}.
\end{align}
This also can be  written as
\begin{align*}
D^{{(p)}} (s_{\lambda_1,\dots, \lambda_l}):=
\det
\begin{pmatrix}
h_{\lambda_1+1} ^{(p)} &h^{(0)}_{\lambda_1+1}&\dots&h^{(l-1)}_{\lambda_1+1}&| &\dots\\
h_{\lambda_2}^{(p)}  &h^{(0)}_{\lambda_2} &\dots  &h^{(l-1)}_{\lambda_2}&| &\dots\\
\dots &\dots&\dots&\dots&|&\dots\\
h_{\lambda_l-l+2} ^{(p)} &h^{(0)}_{\lambda_l-l+2} &\dots&h^{(l-1)}_{\lambda_l-l+2}&| &\dots\\
\\
\hline\\
h_{-l+1}^{(p)} &h^{(0)}_{-l+1}&\dots &h^{(l)}_{-l+1} &|&\dots\\
h_{-l}^{(p)} &h^{(0)}_{-l}&\dots &h^{(l)}_{-l} &|&\dots\\
&\dots&\dots&\dots&|&\dots
\end{pmatrix} .
\end{align*}

The  matrix  above  is obtained from the matrix $S_\lambda$  by raising all  the lower indices of its elements  by $1$ and   by  adding  the  first  column  with entries  $(h^{(p)}_{\lambda_i-i+2})_{i=1,2,\dots }$.
Then  this  matrix  has the form
$$
\begin{pmatrix}
A&B\\
C&D
\end{pmatrix},
$$
where $A$ is $(l+1)$ by  $(l+1)$ matrix,  $C$  is a matrix with all  zero-entires except (may be) the first column, and  $D$  is upper-triangular matrix with $1$'s  on the diagonal. Note that
$
h_{-i+2}^{(p)} =0$  for  $-i+2+p<0$, and therefore, the  sequence of determinants of $N\times N$ upper left corner submatrices of this matrix stabilizes  to a well-defined  polynomial in variables
$h^{(k)}_a$. Hence  $D^{(p)}$  is a well-defined operator  on $B$.

Note that  $D^{(p)}\equiv 0$  for $p\ge 0$,     since in this case the defining matrix  in $D^{(p)} (s_{\lambda_1,\dots, \lambda_l})$ will have two identical columns.
Also note that $D^{(p)} (s_{\lambda_1,\dots, \lambda_l})=0$ if $-p>\lambda_1+1$,   $D^{(-1)}(1)=1$, and $D^{(p)}(1)=0$ for $p\ne-1$.
Thus, $D^{(-p)}$ is  the analogue of $D_{h_{p-1}}$.

 \begin{proposition}\label{p_1}
 \begin{align}
  \psi_{k}|_{B^m}&=\sum_{p\in \bZ} (-1)^p h_{k-m-1}^{(p)} D_{p}z,\label{vk22}\\
 \psi^*_{k}|_{B^m} &=(-1)^{k-m+1} \sum_{p\in \bZ} (-1)^{p}e^{(p)}_{k-m+1} D^{(-p)}z^{-1} \label{vk11}.
 \end{align}
  \end{proposition}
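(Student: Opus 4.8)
The strategy is to verify both formulas directly on the basis $\{s_\lambda z^m\}$ of $\mathcal B^{(m)}$, comparing with the defining rules \eqref{p10} and \eqref{p100}. For \eqref{vk22}, fix a partition $\lambda=(\lambda_1\ge\dots\ge\lambda_l)$ and an integer $k$. The right-hand side applied to $s_\lambda z^m$ gives $\sum_{p}(-1)^p h^{(p)}_{k-m-1}\,D_p(s_\lambda)\,z^{m+1}$, where $D_p(s_\lambda)$ is the determinant in \eqref{DP1} obtained from $S_\lambda$ by lowering all indices by one and deleting column $p$. The plan is to recognize this sum as the Laplace expansion along the first row of the $S_\mu$-determinant for $\mu=(k-m-1,\lambda_1,\lambda_2,\dots,\lambda_l)$: the $(1,p+1)$-cofactor of that matrix is, up to the sign $(-1)^p$, precisely the minor obtained by deleting the first row and the $(p+1)$-st column, which after the index bookkeeping matches $D_p(s_\lambda)$. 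Indeed, the first row of $S_{(k-m-1,\lambda)}$ is $(h^{(0)}_{k-m-1}, h^{(1)}_{k-m-1}, h^{(2)}_{k-m-1},\dots)$, and deleting row $1$ and column $p+1$ from $S_{(k-m-1,\lambda)}$ shifts every lower index of the remaining rows (which came from $\lambda_i-i$ with the extra initial row) down by one relative to $S_\lambda$, giving exactly the matrix in \eqref{DP1}. Hence $\sum_p (-1)^p h^{(p)}_{k-m-1} D_p(s_\lambda) = \det S_{(k-m-1,\lambda)} = s_{(k-m-1,\lambda)}$, so the right side equals $s_{(k-m-1,\lambda)}z^{m+1} = \psi_k(s_\lambda z^m)$ by \eqref{p10}. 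One checks the edge cases ($p<0$ or $p>l$, where $D_p\equiv 0$) are consistent with the stabilized determinant, and the case $\lambda$ empty against $D_0(1)=1$.

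For \eqref{vk11}, fix $\lambda$ and $k$; the right side applied to $s_\lambda z^m$ is $(-1)^{k-m+1}\sum_p (-1)^p e^{(p)}_{k-m+1}\, D^{(-p)}(s_\lambda)\, z^{m-1}$. Here $D^{(-p)}(s_\lambda)=\sum_{t\ge 1}(-1)^{t+1} h^{(-p)}_{\lambda_t-t+2}\, s_{(\lambda_1+1,\dots,\lambda_{t-1}+1,\lambda_{t+1},\dots)}$ by \eqref{Dpu}. Substituting and exchanging the order of summation, the coefficient of $(-1)^{t+1}s_{(\lambda_1+1,\dots,\lambda_{t-1}+1,\lambda_{t+1},\dots)}z^{m-1}$ becomes $(-1)^{k-m+1}\sum_p (-1)^p e^{(p)}_{k-m+1} h^{(-p)}_{\lambda_t-t+2}$. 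The plan is to apply Newton's identity \eqref{Ng}: setting $a=k-m+1$ and $b=\lambda_t-t+2$ in $\sum_p (-1)^{a-p} h^{(p)}_b e^{(-p)}_a = \delta_{a,b}$, and replacing $p\mapsto -p$, we get $\sum_p (-1)^{a+p} h^{(-p)}_b e^{(p)}_a = \delta_{a,b}$, so $(-1)^{a}\sum_p (-1)^p e^{(p)}_a h^{(-p)}_b = \delta_{a,b}$. With $a=k-m+1$ this is exactly $(-1)^{k-m+1}\sum_p (-1)^p e^{(p)}_{k-m+1} h^{(-p)}_{\lambda_t-t+2} = \delta_{k-m+1,\,\lambda_t-t+2} = \delta_{k-m-1,\,\lambda_t-t}$. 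Therefore the right side collapses to $\sum_t (-1)^{t+1}\delta_{k-m-1,\lambda_t-t}\, s_{(\lambda_1+1,\dots,\lambda_{t-1}+1,\lambda_{t+1},\dots)}z^{m-1}$, which is precisely $\psi^*_k(s_\lambda z^m)$ from \eqref{p100}.

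The main obstacle is the first part: making the Laplace-expansion identification rigorous in the presence of the infinite stabilizing determinants and the index shifts. One must be careful that deleting a row and column from the block-triangular matrix $S_{(k-m-1,\lambda)}$ still yields a block-triangular matrix whose determinant stabilizes to $D_p(s_\lambda)$, and that the cofactor signs work out (the sign $(-1)^{1+(p+1)}=(-1)^p$ from expanding along the first row). The degree bound $h^{(r)}_k\in B^{\le k+r}$ and property \eqref{prop11} guarantee the block structure and stabilization, so this is bookkeeping rather than a genuine difficulty, but it is where care is needed. The second part is essentially immediate once Newton's identity is invoked, modulo the same remark that $D^{(-p)}$ is a well-defined operator (already established before the proposition) and that the double sum may be reordered (only finitely many terms are nonzero, by the support conditions on $h^{(r)}_k$ and $e^{(p)}_a$).
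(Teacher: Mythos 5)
Your proof is correct and follows essentially the same route as the paper: expanding the stabilized determinant $s_{(k-m-1,\lambda)}$ along its first row to identify the cofactors with $D_p(s_\lambda)$ for (\ref{vk22}), and substituting the definition (\ref{Dpu}) of $D^{(-p)}$, exchanging the (finitely supported) sums, and invoking Newton's identity (\ref{Ng}) to produce the Kronecker delta for (\ref{vk11}). The sign bookkeeping and the reduction $\delta_{k-m+1,\lambda_t-t+2}=\delta_{k-m-1,\lambda_t-t}$ match the paper's computation exactly.
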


 \begin{proof} We compare the action of operators on  both sides on the basis elements $s_\lambda z^m$.
 For the proof of (\ref{vk22}) use the expansion of the determinant $s_{(k-m-1,\lambda)}$ by the first  row:
 \begin{align*}
   \psi_{k} (s_\lambda z^m)&=s_{(k-m-1,\lambda)} z^{m+1}=  \sum_{p=0}^{l} (-1)^ph_{k-m-1}^{(p)} \det [h^{(j)}_{\lambda_i-i}]_{(j=0,1,\dots, \hat p, \dots l,\,  i=1,\dots,l)} z^{m+1} \\
    &=\left(\sum_{p\in \bZ}(-1)^ph_{k-m-1}^{(p)} D_{p} z \right) s_{\lambda}z^m.
     \end{align*}
For the proof of (\ref{vk11}) we  apply Newton's formula: 
 \begin{align*}
 &\left(\sum_{p\in \bZ} (-1)^{p+k-m+1}e^{(p)}_{k-m+1} D^{(-p)} z^{-1}\right)s_\lambda z^m \\
  &= \sum_{p\in \bZ} (-1)^{p+k-m+1}e^{(p)}_{k-m+1}\sum_{t=1}^{\infty}(-1)^{t+1} h^{(-p)}_{\lambda_t-t+2} s_{(\lambda_1+1,\dots, \lambda_{t-1}+1,  \lambda_{t+1},\lambda_{t+2}, \dots )} z^{m-1}
 \\
 &= \sum_{t=1}^{\infty}(-1)^{t+1}  \left(\sum_{p\in \bZ} (-1)^{p+k-m+1}e^{(p)}_{k-m+1}\,h^{(-p)}_{\lambda_t-t+2}\right)s_{(\lambda_1+1,\dots, \lambda_{t-1}+1,  \lambda_{t+1},\lambda_{t+2}, \dots )}z^{m-1}\\
  &=\sum_{t=1}^{\infty}(-1)^{t+1}  \delta_{k-m+1, \lambda_t-t+2}\,s_{(\lambda_1+1,\dots, \lambda_{t-1}+1,  \lambda_{t+1},\lambda_{t+2}, \dots )}z^{m-1}= \psi^*_{k} (s_\lambda z^m).
 \end{align*}

\end{proof}

\section{Applications - explicit formulas for vertex operators}

In this section we illustrate on examples  how the generalized Jacobi-Trudi identity  almost  effortlessly  provides vertex operators for the action of Clifford algebra on  some analogues of symmetric functions.
Note also  that, given  formal  distributions $\Psi (u)$, $\Psi^*(u)$ those coefficients  satisfy Clifford algebra relations, one can   define
the formal distributions
\begin{align*}
\alpha(u)=:\Psi(u)\Psi^*(u):\,,\quad
X(u,v)=:\Psi(u)\Psi^*(v):\,,\\
L(u)={1}/{2}( :\partial\Psi(u)\Psi^*(u):+:\partial \Psi^*(u)\Psi(u):),\quad
\end{align*}
with the property that the coefficients of $\alpha(u)$  satisfy relations of Heisenberg algebra  generators, coefficients  of $X(u,v)$ satisfy the relations of  Lie  algebra $gl_\infty$  generators, and  $L(u)$  is a  Virasoro   formal distribution (see e.g.  (16.11), (16.15) and  Corollary 16.1 in  \cite{Bom}).
\subsection*{Characters of classical  Lie algebras }
Let $\fg$ be a symplectic or orthogonal Lie  algebra. In  \cite{JTClass2} it is shown that, similarly to Schur functions,  universal characters  $\{\chi_{\fg}(\lambda)\}$ of  irreducible $\fg$-representations
form  a linear basis of  the ring of  ordinary symmetric functions.  For $a>0$ denote as  $J_a$   the universal  character of irreducible $\fg$-representation  with the highest weight $a\omega_1$, where $\omega_1$
is the first fundamental weight. Set $J_0=1$, and   $J_a=0$ if $a<0$.
Several forms of Jacobi-Trudi identities for characters of irreducible representations  of classical Lie algebras are known (e.g. \cite{FH}, \cite{JTClass2}).
Here we would like to use the identities of     \cite{FH} (Propostions 24.22, 24.33,  24.44):
\begin{proposition}
Let $\lambda=(\lambda_1\ge \lambda_2\ge\dots \ge  \lambda_r>0)$ be a partition.
Then
the universal  character $\chi_{\fg}(\lambda)$ of  irreducible $\fg$-representation  with the highest weight $ \lambda$
is given by
\begin{align*}
\chi_{\fg}(\lambda)=\det[{h^{(j-1)}_{\lambda_i-i+1}}]_{i,j=1\dots r},
\end{align*}
\begin{align}\label{ch1}
h^{(r)}_{a}=\begin{cases}
J_{a+r}+J_{a-r},\quad \text{if} \quad r> 0,\\
J_{a+r},\quad \text{if} \quad r\le 0.
\end{cases}
\end{align}
\end{proposition}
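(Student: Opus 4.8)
The plan is to reduce this to the general Jacobi\,--\,Trudi setup of Section~\ref{ident}, so that almost nothing remains to be proved beyond checking the bookkeeping conditions \eqref{prop11} and the linear-independence requirement. First I would take the $h^{(r)}_a$ of \eqref{ch1} as the candidate data for the general construction, with $h^{(0)}_a = J_a$ (the characters of the symmetric powers $S^a(V)$ for the standard representation $V$), and observe that the determinant formula $\chi_\fg(\lambda) = \det[h^{(j-1)}_{\lambda_i-i+1}]_{i,j=1,\dots,r}$ is precisely the special case \eqref{jt1} of the generalized Jacobi\,--\,Trudi identity for this choice of $h^{(r)}_k$. Thus the content of the proposition is: verify that the $h^{(r)}_a$ of \eqref{ch1} do satisfy the structural requirements \eqref{prop11}, and that the resulting $s_\lambda = \chi_\fg(\lambda)$ form a linear basis of the ring of symmetric functions.

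For the structural conditions I would check \eqref{prop11} directly from \eqref{ch1}. For $k+r<0$: if $r\le 0$ then $h^{(r)}_k = J_{k+r}=0$ since $k+r<0$; if $r>0$ then $h^{(r)}_k = J_{k+r}+J_{k-r}$ and both subscripts $k+r<0$ and $k-r<k+r<0$ are negative, so this vanishes too. For the normalization $h^{(k)}_{-k}=1$: with $r=k$ and $a=-k$, if $k>0$ we get $J_{0}+J_{-2k}=1+0=1$, and if $k\le 0$ (so $r=k\le 0$) we get $J_{a+r}=J_{-k+k}=J_0=1$. One also needs the grading condition $h^{(r)}_k\in B^{\le k+r}$, which holds because $J_a=\chi_\fg(a\omega_1)$ has degree $a$ as a symmetric function (it is the character of $S^a V$, degree $a$), so $J_{k+r}$ and $J_{k-r}$ both lie in $B^{\le k+r}$.

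The one genuinely non-trivial input is the linear-independence requirement — that $\{\chi_\fg(\lambda)\}_\lambda$ is a basis of the ring of symmetric functions — and here I would simply cite \cite{JTClass2}, which is exactly where this is established (as the excerpt already notes). Alternatively one can argue triangularity: by the Jacobi\,--\,Trudi determinant and \eqref{ch1}, $\chi_\fg(\lambda) = s_\lambda + \sum_{\mu<\lambda} c_{\lambda\mu} s_\mu$ with the correction terms $J_{a-r}$ contributing only strictly smaller partitions (they lower the total degree or, after the determinant expansion, land in lower pieces of the branching), so the change of basis from classical Schur functions $s_\lambda$ to $\chi_\fg(\lambda)$ is unitriangular and hence invertible. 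I expect the main obstacle to be making this last triangularity statement precise — i.e.\ exhibiting the right partial order on partitions under which the extra $J_{a-r}$ terms are genuinely lower-order — and for a clean writeup I would lean on \cite{JTClass2} rather than reprove it. Once \eqref{prop11} and the basis property are in hand, the proposition follows immediately by specializing \eqref{jt1}, and as an automatic corollary the Clifford-algebra action \eqref{p10}--\eqref{p100} and the vertex-operator presentation \eqref{vk22}--\eqref{vk11} apply to $\chi_\fg(\lambda)$.
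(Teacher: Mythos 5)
There is a genuine gap: your argument never actually establishes the content of the proposition, which is the \emph{equality} between the representation-theoretic object $\chi_{\fg}(\lambda)$ (the universal character of the irreducible $\fg$-module of highest weight $\lambda$, defined independently of any determinant) and the determinant $\det[h^{(j-1)}_{\lambda_i-i+1}]$ built from the $J_a$'s via \eqref{ch1}. Saying that this determinant ``is precisely the special case \eqref{jt1}'' is circular here: in the general framework of Section 3, formula \eqref{jt1} is the \emph{definition} of an abstract polynomial $s_\lambda$ in the ring $B=\bC[h^{(0)}_1,h^{(0)}_2,\dots]$, and nothing in that framework identifies this polynomial with a character of $\fsp_{2n}$ or $\fo_n$. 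That identification is a nontrivial theorem of representation theory (Weyl character formula plus determinantal manipulations, with the familiar first-column subtlety that explains why $h^{(0)}_a=J_a$ rather than $2J_a$), and the paper does not reprove it: the proposition is quoted from \cite{FH} (Propositions 24.22, 24.33, 24.44), with \cite{JTClass2} as an alternative source. Your alternative ``triangularity'' sketch has the same circularity, since writing $\chi_\fg(\lambda)=s_\lambda+\sum_{\mu<\lambda}c_{\lambda\mu}s_\mu$ already presupposes the determinantal formula you are trying to prove; it is an argument for the basis property \emph{given} the identity, not for the identity itself.

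What you do verify is correct and useful, but it belongs to the application rather than to this statement: the checks that the $h^{(r)}_a$ of \eqref{ch1} satisfy \eqref{prop11} (including $h^{(k)}_{-k}=1$ and vanishing for $k+r<0$), the grading condition, and the basis property of $\{\chi_\fg(\lambda)\}$ (for which citing \cite{JTClass2} is the right move) are exactly what is needed so that the machinery of Sections 3--4 — the Clifford action \eqref{p10}--\eqref{p100} and the formulas \eqref{vk22}--\eqref{vk11} — applies to these characters, as in the subsequent Proposition on vertex operators. To repair the proof of the present proposition, either cite \cite{FH} (or \cite{JTClass2}) for the determinantal identity itself, or supply an actual representation-theoretic derivation; reduction to the general Jacobi--Trudi setup cannot do that work.
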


Hence we consider the boson space  with   $\{h^{(r)}_a\}$  as in (\ref{ch1}), and  the linear basis consisting of elements  ${s_\lambda= \chi_\fg(\lambda)}$.
This allows to write the corresponding  variation of formulas (\ref{psi1}), (\ref {psi11}).
\begin{proposition}\label{char}
Generating  functions (\ref{Gamma}) are given by
\begin{align*}
\Psi(u,m)&=u^{m+1}zJ(u) ( DE(-u)+DE(-1/u)- D_0), \\
\Psi^*(u,m)&= u^{-m+1}z^{-1}J(u)^{-1}(DH(u)- DH(1/u)),
 \end{align*}
 where $J(u)=\sum_{s\in \bZ}   { J_{s}} {u^{s}} $, 
   $DE(u)=\sum_{p\in \bZ}   { D_{p}} {u^{p}},$   and  $DH(u)=\sum_{p\in \bZ}   { D^{(p)}} {u^{p}}$, with
 operators $D_{p}$ and  $D^{(p)}$
  defined by (\ref{Dp}),  (\ref{Dpu}).
\end{proposition}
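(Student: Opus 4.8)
The strategy is to substitute the expressions for $\psi_k$ and $\psi^*_k$ from Proposition \ref{p_1} into the generating functions $\Psi(u,m)$ and $\Psi^*(u,m)$, and then recognize the resulting double sums as products of the generating functions $J(u)$, $DE(\cdot)$, $DH(\cdot)$ once the special shape of $h^{(p)}_a$ and $e^{(p)}_a$ given by (\ref{ch1}) is taken into account. First I would write
\begin{align*}
\Psi(u,m)=\sum_{k\in\bZ}\psi_k|_{B^m}u^k=\sum_{k\in\bZ}\sum_{p\in\bZ}(-1)^p h^{(p)}_{k-m-1}D_p z\,u^k,
\end{align*}
reindex with $a=k-m-1$, pull out $u^{m+1}z$, and split the inner sum according to whether $p>0$ or $p\le 0$ using (\ref{ch1}). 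The $p\le 0$ part contributes $\sum_{a,p}(-1)^p J_{a+p}D_p u^a$; setting $s=a+p$ this factors as $\big(\sum_s J_s u^s\big)\big(\sum_{p\le 0}(-1)^p D_p u^{-p}\big)$, i.e. $J(u)\,DE(-1/u)$ in the notation of the proposition. The $p>0$ part contributes $\sum_{a,p>0}(-1)^p(J_{a+p}+J_{a-p})D_p u^a$; the $J_{a+p}$ piece gives $J(u)\sum_{p>0}(-1)^p D_p u^{-p}$ and the $J_{a-p}$ piece gives $J(u)\sum_{p>0}(-1)^p D_p u^{p}$. Adding back the $p=0$ term $D_0$ once to each range as needed, the three pieces combine to $J(u)\big(DE(-u)+DE(-1/u)-D_0\big)$, since $DE(-u)+DE(-1/u)=\sum_{p\in\bZ}(-1)^pD_p(u^p+u^{-p})$ double-counts the $p=0$ term.

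For $\Psi^*(u,m)$ I would proceed the same way starting from (\ref{vk11}):
\begin{align*}
\Psi^*(u,m)=\sum_{k\in\bZ}(-1)^{k-m+1}\sum_{p\in\bZ}(-1)^p e^{(p)}_{k-m+1}D^{(-p)}z^{-1}u^{-k},
\end{align*}
reindex $b=k-m+1$ so $u^{-k}=u^{-m+1}u^{-b}$ and $(-1)^{k-m+1}=(-1)^b$, obtaining the prefactor $u^{-m+1}z^{-1}$ times $\sum_{b,p}(-1)^{b+p}e^{(p)}_{b}D^{(-p)}u^{-b}$. The key input here is that by Newton's identity (\ref{Ng}), the matrix $\mathcal E$ of signed $e^{(p)}_{-a}$'s is the inverse of $\mathcal H$; when $h^{(p)}_a$ has the form (\ref{ch1}), so $H(u)=\sum h^{(p)}_a$-type generating object equals $J(u)$ on the relevant diagonal, the inverse relation forces the generating function of the $(-1)^{b+p}e^{(p)}_b$ to be a suitable combination of $J(u)^{\pm1}$; concretely one shows $\sum_{b,p}(-1)^{b+p}e^{(p)}_b D^{(-p)}u^{-b}=J(u)^{-1}\big(DH(u)-DH(1/u)\big)$ by matching it term-by-term against the claimed right-hand side, or equivalently by checking that multiplying by $J(u)$ (which is $\sum_s J_s u^s$, the generating function attached to $\mathcal H$) reproduces $DH(u)-DH(1/u)$ via the $p>0$ versus $p\le0$ split of (\ref{ch1}) exactly as in the $\Psi$ computation. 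Finally I would verify the formulas agree with the classical specializations (\ref{psi1}), (\ref{psi11}) as a sanity check and to pin down the remaining signs.

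\textbf{Main obstacle.} The routine part is the reindexing and factorization of geometric-type double sums; the genuinely delicate point is bookkeeping the $p=0$ boundary term and the overall signs in the $\Psi^*$ formula — in particular justifying the appearance of $J(u)^{-1}$ rather than $J(u)$. This requires either invoking the infinite-matrix identity $\mathcal H\mathcal E=Id$ from Proposition \ref{p:newton} at the level of generating functions (so that the $e^{(p)}$-side generating function is literally the inverse of the $h^{(p)}$-side one, which for (\ref{ch1}) is $J(u)$ up to the $u^p+u^{-p}$ folding) and then separately checking that $DH(u)-DH(1/u)$ — not $DH(u)+DH(1/u)$ — is the correct antisymmetric combination forced by $D^{(p)}\equiv0$ for $p\ge0$, or doing a careful direct term comparison on basis vectors $s_\lambda z^m$. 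I expect the sign factor $(-1)^{k-m+1}$ interacting with the folding in (\ref{ch1}) to be the one place where a naive computation most easily goes wrong, so I would treat that step explicitly rather than as a routine calculation.
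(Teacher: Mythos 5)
Your computation of $\Psi(u,m)$ is essentially the paper's: expand via Proposition \ref{p_1}, use $D_p=0$ for $p<0$, insert (\ref{ch1}), and correct for the double-counted $p=0$ term; the only slip is your intermediate labelling of the $p\le 0$ piece as $J(u)\,DE(-1/u)$ — since $D_p=0$ for $p<0$ that piece is just $J(u)D_0$, while $DE(-1/u)$ contains all $p\ge 0$ — but your final bookkeeping $DE(-u)+DE(-1/u)-D_0$ repairs this and lands on the paper's formula. For $\Psi^*(u,m)$ you take a somewhat different (and workable) route. The paper is constructive: it introduces $K(-u)$ with $J(u)K(-u)=1$, writes $\mathcal H=\mathcal{J}A$ with the folding matrix $A=\sum_i E_{ii}+\sum_{i>0}E_{-i\,i}$, inverts to get $\mathcal E=A^{-1}\mathcal K$, hence the explicit values $e^{(p)}_j=K_{p-j}$ for $p\le 0$ and $e^{(p)}_j=K_{p-j}-K_{-p-j}$ for $p>0$, and then the double sum collapses directly to $u^{-m+1}z^{-1}K(-u)(DH(u)-DH(1/u))$. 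You instead propose to verify the claimed factorization after the fact by multiplying by $J(u)$. That does work — the identity you need, for each $p>0$, is $J(u)\sum_b(-1)^{b+p}e^{(p)}_b u^{-b}=u^{-p}-u^{p}$ — but to carry it out you should make two things explicit that your sketch leaves implicit: (i) the contraction involved is over the lower indices, i.e. you need $\mathcal E\mathcal H=Id$, whereas Proposition \ref{p:newton} as stated is the superscript contraction $\mathcal H\mathcal E=Id$; this is harmless because both matrices are upper unitriangular, so the inverse is two-sided, but it is not literally (\ref{Ng}); (ii) cancelling $J(u)$ is legitimate because $J_s=0$ for $s<0$ and $J_0=1$, so $J(u)$ is an invertible power series, and each series $\sum_b(-1)^{b+p}e^{(p)}_bu^{-b}$ is bounded below in powers of $u$ since $e^{(p)}_b=0$ for $b>p$ (Definition \ref{def_el}). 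With those points supplied, the minus sign in $DH(u)-DH(1/u)$ and the factor $J(u)^{-1}=K(-u)$ come out exactly as in the statement, so your plan is sound; the paper's explicit computation of the $e^{(p)}_j$ has the side benefit of yielding the formulas quoted in the remark following the proposition.
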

\begin{proof}
Using that $D_p\equiv 0$ for  $p<0$, we derive
 \begin{align*}
  \Psi(u,m)&
=u^{m+1}z\sum_{s\in \bZ}   \sum_{p\ge 0}  (-1)^p h^{(p)}_{s} D_p\,  {u^s} \\
&=u^{m+1}z\sum_{s\in \bZ} \big( \sum_{p\ge 0}  (-1)^p (J_{s+p}+ J_{s-p})  D_p\,-\,J_sD_0 \big){u^s} \\
&=u^{m+1}zJ(u) (DE(-1/u)+DE(-u)- D_0).
\end{align*}
Let $K(-u)=\sum_{p\in \bZ}K_p (-u)^{p}$  be  a formal series defined by the  property
\begin{align}\label{JK1}
J(u)K(-u)=1.
\end{align}
 As usual, (\ref{JK1})  is equivalent to  the  matrix form  relation
$
\mathcal J\mathcal K=Id,
$
where $ \mathcal{J}= \sum_{i,j\in \bZ} J_{-i+j}E_{ij}$,  and  $\mathcal K= \sum_{i,j\in \bZ}(-1)^{j-i} K_{j-i}E_{ij}$.
Define the  the matrix   $A=\sum_{i\in\bZ} E_{ii}+\sum_{i>0} E_{-ii}$ and the  inverse
$A^{-1}=\sum_{i\in\bZ} E_{ii}-\sum_{i>0}E_{-ii}$. Then  the matrix $\mathcal H$  in (\ref{HE})   for  $h^{(r)}_a$  from (\ref{ch1}) is  given by    $\mathcal H= {\mathcal{J}}A$, and,
since $\mathcal H \mathcal E=Id$,
 we have $\mathcal E=A^{-1} {\mathcal{K}}$ and
 \begin{align*}
e^{(i)}_{j}=
\begin{cases}
K_{i-j}, &\quad
 \text {for}\quad  i\le 0,\\
K_{i-j}-K_{-i-j},&\quad \text {for}\quad  i>0.
\end{cases}
\end{align*}

Using that $D^{(-p)}\equiv 0$ for  $p\le 0$, we derive
 \begin{align*}
  \Psi^*(u,m)
&=u^{-m+1}z^{-1} \sum_{s\in \bZ}   \sum_{p>0}  (-1)^p e^{(p)}_{s} {(-u)^{-s}}D^{(-p)} \\
&=u^{-m+1}z^{-1} \sum_{s\in \bZ}    \sum_{p>0}  (-1)^p (K_{p-s}-K_{-p-s})(-u)^{-s} D^{(-p)} \\
&=u^{-m+1} z^{-1}    K(-u) \sum_{p>0}  (-1)^p ((-u)^{-p} -(- u)^p) D^{(-p)}  \\
&=u^{-m+1}   z^{-1} K(-u) (DH(u) - DH(1/u)).
\end{align*}
\end{proof}
 \begin{remark}  Relation (\ref{JK1}) implies  that, together   with
$K_0=1$, and
$K_p=0$  for $p<0$,
$K_{p}= \det[ J_{1-i+j}] _{1\le i,j\le p}$, for $ p>0$, and $\chi_\fg((1^{p}))= K_p-K_{p-2}$.
\end{remark}
 \begin{remark}
 It is known that characters of   the classical Lie  algebras can be  expressed through evaluations of classical symmetric  functions, where
 the following identifications take place:
 \begin{align*}
J _a&= h_a(x_1,\dots, x_n, x_1^{-1},\dots x_n^{-n})-h_{a-2}(x_1,\dots, x_n, x_1^{-1},\dots x_n^{-n})\quad \text{for }\quad \fg= \fo_{2n},\\
J_a&= h_a(x_1,\dots, x_n, x_1^{-1},\dots x_n^{-n},1)-h_{a-2}(x_1,\dots, x_n, x_1^{-1},\dots x_n^{-n},1)\quad \text{for}\quad  \fg= \fo_{2n+1},\\
J_a&= h_a(x_1,\dots, x_n, x_1^{-1},\dots x_n^{-n}) \quad \text{for} \quad \fg= \fsp_{2n},
 \end{align*}
where  $h_k$  are  the ordinary complete symmetric functions (\ref{hclass}).
Accordingly,  formulas of  Proposition \ref{char} can be rewritten in terms of the power sums  (\ref{class}), and then
 one can compare  Proposition \ref{char} with  the  vertex operators introduced  in \cite{NJ}.  In \cite{NJ}  the opposite  direction is  undertaken:  vertex operators that  give realization of characters of classical Lie algebras are  introduced  through their action  on the boson space. The  operators satisfy generalized  fermions relations, this allows the  authors of \cite{NJ} to deduce  several versions of Jacobi-Trudi  identities for  characters of  the orthogonal and symplectic Lie algebras.
\end{remark}

\subsection*{Shifted Schur functions.}
We follow   notations  and  definitions of \cite{OO1}.  Shifted  symmetric functions are well-known for their  applications in the study of the centers  of  universal enveloping algebras,  of Capelli-type identities and   of asymptotic  characters  for unitary groups and symmetric groups. Let $U(gl_n)$  be the universal enveloping  algebra  of the  Lie algebra $gl_n$. The Harish-Chandra isomorphism  identifies  the center of  $U(gl_n)$  with the algebra of  shifted symmetric functions,  sending  a central element to its eigenvalue on a highest  weight module.
There is a distinguished  basis of the center, the images of the elements of that basis  under the isomorphism are the  so-called  shifted Schur functions.
Thus, using Harish-Chandra isomorphism, the constructed below action of Clifford algebra  on shifted symmetric functions can be  transported to the action of Clifford algebra on the center of $U(gl_n)$.

Combinatorially  a shifted Schur polynomial   $s^*_\lambda(x_1,\dots, x_n)$ can  be defined as
 a ratio of determinants
\begin{align}
s_\lambda^*(x_1,\dots, x_n)=\frac{\det(x_i+n-i|\lambda_j+n-j)}{\det(x_i+n-i|n-j)},
\end{align}
where
\begin{align*}
 (x|k)= 
 \begin{cases}
 x(x-1)\dots (x-k+1) \quad \text{for  $k=1,2\dots,$}\\
 1,\quad \text{for  $k=0$,}\\
  \frac{1}{(x+1)\dots (x+(-k) )} \quad \text{for  $k=-1,-2\dots.$}\\
 \end{cases}
 \end{align*}
Note that for generic  value of $x$  we can write $(x|k)= \frac{{\bf \Gamma}(x+1)}{{\bf \Gamma}(x+1-k)}$, where ${\bf\Gamma} (x)$ is the special gamma-function.
Also the following relations are useful in further  calculations:
\begin{align} \label{pr1}
(x|k)\,(-x-1|-k)=(-1)^k,\quad\quad  {(x|a)}
={(x|b)} {(x-b|a-b)}.
\end{align}
 The stability property of shifted Schur  polynomials allows  to introduce the shifted Schur functions, which we denote as
$s_\lambda^*=s_\lambda^*(x_1, x_2,\dots)$.
In  particular, the  complete  shifted Schur functions $h^*_r=s^*_{(r)}$ are
\begin{align*}
h^*_r(x_1,x_2,\dots )=\sum_{1\le i_1 \le \dots \le i_r <\infty} (x_{i_1}-r+1)(x_{i_2}-r+2)\dots x_{i_r},
\end{align*}
and the  elementary shifted Schur functions $e^*_r=s^*_{(1^r)}$ are
\begin{align*}
e^*_r(x_1,x_2,\dots )=\sum_{1\le i_1 <\dots < i_r <\infty} (x_{i_1}+r-1)(x_{i_2}+r-2)\dots x_{i_r}.
\end{align*}
 Theorem 13.1  in  \cite{OO1} states that
\begin{align*}
s^*_\lambda=\det[\phi^{j-1}h^*_{\lambda_i-i+j}]_{1\le i,j\le l},
\end{align*}
where $\phi$ is the automorphism of  the algebra  of  shifted Schur  functions, defined by the formula
$$ \phi(h^*_k)=h^*_k+(k-1)h^*_{k-1}.$$

By Corollay 1.6. in \cite{OO1},  shifted Schur  functions $s^*_\lambda$  form a linear basis in the ring of shifted symmetric functions, which is also a polynomial  ring in variables
$h^*_1, h^*_2,\dots$. Thus, we  are exactly in the setting of the  Section \ref{ident} with $s^*_\lambda=\det[{h^{(j-1)}_{\lambda_i-i+1}}]$ and
$$
h^{(r)}_k=\phi^r h^*_{k+r}. 
$$

In this case our  Definition \ref{def_el}  gives for $p>a$,
\begin{align}\label{sh2}
e^{(p)}_a=\det[\phi^{-p+j}h_{1-i+j}]_{1\le i,j\le p-a}.
\end{align}
Note that  $
e^*_k=  e^{(1)}_{1-k}.
$
Moreover,\begin{align*}
\phi^{-p+1}(e^*_k)=\det[\phi^{j-1}(\phi^{-p+1}h_{1-i+j})]_{1\le i,j\le k}=\det[\phi^{-p+j}h_{1-i+j}]_{1\le i,j\le k}= e^{(p)}_{p-k}.
\end{align*}

Traditionally,  for shifted Schur functions  generating functions  are written  not in powers  $u^k$, but in  shifted powers ${(u|k)}$, which makes some  formulas easier to work with (see  e.g. \cite {OO1}).
Let us  consider both cases: as before, we define    $\Psi(u,m)$, $\Psi^{*}(u,m)$   by (\ref{Gamma}), using  ordinary powers of $u$.
 Alternatively,
set
\begin{align*}
 \tilde \Psi(u,m)= \sum_{k\in \bZ}{\psi}_{k}|_{B_m}
\frac {1} {(u|k-1)},
\quad
 \tilde \Psi^*(u,m)
 = \sum_{k\in \bZ}{\psi}^{*}_{k}|_{B_m} {(-u|k+1)},
\end{align*}
using  shifted powers of $u$.
\begin{proposition} Generating functions  have the   form
\begin{align*}
 \Psi(1/u, m )&
 = zu^{-m-1} \sum_{p\ge 0} (\partial_u-1)^{p}   \big ( H(1/u)\big) \,\,
 D_p{u^p},\\
 \Psi^*(1/u,m)&
=  z^{-1}u^{m}\sum_{p\ge 0} (\partial_u+1)^{p}  \big(E(-1/u)\big) \,D^{(-p-1)} u^{p},\\
\tilde  \Psi(u,m)&=z\frac{1}{(u|m)} \tilde H(u-m) {D\tilde E}(u-m),\\
\tilde \Psi^*(u,m)&= z^{-1} {(-u|m)}\tilde E(u+m)   {D\tilde H}(u+m-1),
\end{align*}
where
\begin{align*}
E(u)&=\sum_{r\in \bZ}  e^{*}_{r}{u^{r}},\quad
  H(u)=\sum_{r\in \bZ} h^{*}_{r} {u^{r}},\quad  \tilde E(u)= \sum_{r\in \bZ} \frac{e^{*}_{r}}{(u|r)},\quad \tilde H(u)= \sum_{r\in \bZ}    \frac{h^{*}_{r}}{(u|r)},\\
 {D\tilde E}(u)&=\sum_{p\in \bZ}D_p\frac{(-1)^p}{(u|-p)}, \quad {D\tilde H}(u)=\sum_{p\in \bZ} D^{(p)}  \frac{(-1)^p }{(u|p)}.
\end{align*}
\end{proposition}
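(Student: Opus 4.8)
The plan is to deduce all four generating function identities from the general formulas \eqref{vk22} and \eqref{vk11} of Proposition \ref{p_1}, specialized to the present data $h^{(r)}_k=\phi^r h^*_{k+r}$, $e^{(p)}_a$ as in \eqref{sh2}. Start with the $u^k$-indexed operators. Substituting $u\mapsto 1/u$ in \eqref{Gamma} and using \eqref{vk22}, one gets
\begin{align*}
\Psi(1/u,m)=zu^{-m-1}\sum_{p\in\bZ}(-1)^p\Big(\sum_{k\in\bZ}h^{(p)}_{k}u^{-k}\Big)D_p u^{p+1}\cdot u^{-1}\,,
\end{align*}
so the task reduces to showing $\sum_{k}h^{(p)}_k u^{-k}=(-1)^p(\partial_u-1)^p\big(H(1/u)\big)u^{2p}$-type identities; concretely one verifies $\sum_k \phi^p h^*_{k+p}\,u^{-k}=u^p(\partial_u-1)^pH(1/u)$ by checking that the operator $h^*_r\mapsto \phi h^*_r$ on coefficients corresponds, after the substitution $u\mapsto 1/u$, to the differential operator $(\partial_u-1)$ acting on $H(1/u)$ up to the bookkeeping power of $u$. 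This is the computational heart: one must track how $\phi(h^*_k)=h^*_k+(k-1)h^*_{k-1}$ translates under the generating-function dictionary, and the factor $(k-1)$ is exactly what produces the derivative $\partial_u$ while the ``$+h^*_k$'' term produces the $-1$ shift (with signs from $u\mapsto 1/u$). The same argument with $e^{(p)}_a$, using $\phi^{-p+1}e^*_k=e^{(p)}_{p-k}$ from the displayed computation, handles $\Psi^*(1/u,m)$, with $(\partial_u+1)$ replacing $(\partial_u-1)$ because of the sign in $e^*_k=e^{(1)}_{1-k}$ and the $(-1)^{k-m+1}$ prefactor in \eqref{vk11}.

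For the tilde versions one repeats the computation but with the generating variable $1/(u|k-1)$ instead of $u^k$. The key algebraic input is the second relation in \eqref{pr1}, $(x|a)=(x|b)(x-b|a-b)$, which is the ``shifted'' analogue of $u^a=u^b u^{a-b}$ and makes the shifted powers factor multiplicatively under index shifts. Using \eqref{vk22} and the definition of $\tilde\Psi$,
\begin{align*}
\tilde\Psi(u,m)=\sum_{k\in\bZ}\Big(\sum_{p\in\bZ}(-1)^p h^{(p)}_{k-m-1}D_p z\Big)\frac{1}{(u|k-1)}\,,
\end{align*}
and one wants to resum this as $z\,(u|m)^{-1}\tilde H(u-m)\,D\tilde E(u-m)$. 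The mechanism: write $k-1=(k-m-1)+m$ and apply $(u|k-1)=(u|m)(u-m|k-m-1)$ to peel off the $(u|m)^{-1}$; then the remaining double sum factors because, crucially, the shift $h^*_r\mapsto\phi h^*_r$ becomes multiplication by $1/(v|-1)$-type factors on the shifted-power side, so that $\sum_k h^{(p)}_{k-m-1}/(u-m|k-m-1)$ collapses to $\tilde H(u-m)$ times a power of the shifted-power-generating-function variable that recombines with $D_p$ into $D\tilde E(u-m)$. Concretely one must check the identity $\sum_{k}\phi^p h^*_{k+p}/(v|k)=\tilde H(v)\cdot(\text{factor depending on }p,v)$ and match that factor against the $(-1)^p/(v|-p)$ appearing in $D\tilde E$; this again rests on translating $\phi$ through the dictionary, now using $(x|k)(-x-1|-k)=(-1)^k$ from \eqref{pr1} to convert between $\tilde H$ and $\tilde E$ normalizations. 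The $\tilde\Psi^*$ formula follows symmetrically, with the shift $u+m$ (resp. $u+m-1$) rather than $u-m$ coming from the $z^{-1}$, the $(-u|k+1)$ weighting, and the extra $-1$ in the last argument reflecting the index shift between $h^*$ and $e^*$ in Newton's identity \eqref{Ng}.

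The main obstacle is the bookkeeping in translating the automorphism $\phi$ into an operation on generating functions in two different variables simultaneously: in the $u^k$-case $\phi$ becomes the first-order differential operator $\partial_u\mp 1$ (the $\mp$ depending on whether we are in the $u$ or $1/u$ picture and whether we look at $h^*$ or $e^*$), while in the shifted-power case $\phi$ becomes a multiplicative shift operator on the $(v|k)$-generating function. Getting every sign and every power of $u$ (or every shifted factor $(u|m)$) correct — especially reconciling the $u\mapsto 1/u$ substitution with the asymmetry between $H(u)=\sum h^*_r u^r$ and $\tilde H(u)=\sum h^*_r/(u|r)$ — is where the real work lies; the structural identity $\mathcal H\mathcal E=\mathrm{Id}$ and Newton's identity \eqref{Ng} guarantee that the two exponential-type factors are genuine inverses, so no independent convergence or well-definedness issue arises beyond the stabilization already established for $D_p$ and $D^{(p)}$. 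Once the single-variable translation lemma for $\phi$ is in hand, all four formulas drop out by the same resummation, so I would isolate that lemma first and then treat the four cases as parallel corollaries.
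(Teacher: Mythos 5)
Your plan is essentially the paper's own proof: specialize \eqref{vk22}--\eqref{vk11} to $h^{(r)}_k=\phi^r h^*_{k+r}$, expand $\phi^p$ binomially so that on ordinary powers it acts as a first-order differential operator on $H(1/u)$ (resp.\ $E(-1/u)$), and in the shifted-power case peel off $(u|m)$ via \eqref{pr1} and collapse the double sum so that $\phi^p$ resummed against $1/(v|k)$ becomes multiplication by $(-1)^p/(v|-p)$ recombining with $D_p$, $D^{(-p)}$ into $D\tilde E$, $D\tilde H$ --- exactly the binomial/Gamma-function summation the paper uses. The only caveats are bookkeeping slips you yourself defer: the correct dictionary is $\sum_k\phi^p h^*_{k+p}u^{-k}=u^p(1-\partial_u)^p H(1/u)$, so the $(\partial_u-1)^p$ of the statement appears only after absorbing the $(-1)^p$ from \eqref{vk22}, and your intermediate display carries a spurious extra factor $u^{p}$.
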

\begin{proof}
For the proof of the  first equality  write:
\begin{align*}
 \Psi(1/u,m)&
=u^{-m-1}z\sum_{s\in \bZ}   \left( \sum_{p\ge 0}   (-1)^p \phi^p h^{*}_{s+p}D_p  \right) {u^s}\\
&= u^{-m-1}z  \sum_{p\ge 0} \left( \sum_{i=0}^p \sum_{s\in \bZ} {p\choose i} (s+p-1|i)h^{*}_{s+p-i}   {u^{-s}}\right) (-1)^pD_p \\
&=  u^{-m-1}z \sum_{p\ge 0} \left( \sum_{i=0}^p \sum_{r\in \bZ} {p\choose i} (r+i-1|i)h^{*}_{r}   {u^{-r+p-i}}\right) (-1)^pD_p \\
&= u^{-m-1}z\sum_{r\in \bZ}  h^{*}_{r}  \sum_{p\ge 0}\left( \sum_{i=0}^p  {p\choose i} (r+i-1|i) {u^{-r-i}}\right) (-1)^pD_p{u^p}.
\end{align*}
Using that
$(-\partial_u)^i(u^{-r})
=(r+i-1|i)u^{-r-i}$,
we obtain
\begin{align*}   \sum_{i=0}^{p}  {{p}\choose i} (r+i-1|i)  {u^{-r-i}}= \left(\sum_{i=0}^{p}  {{p}\choose i} (-\partial_u)^i \right) ({u^{-r}})= (1-\partial_u)^{p}({u^{-r}}),
\end{align*}
and
\begin{align*}
\Psi(1/u, m)
&=  u^{-m-1}z \sum_{p\ge 0}  (1-\partial_u)^{p}   \left( \sum_{r\in \bZ} h^{*}_{r} {u^{-r}} \right)(-1)^pD_p{u^p}
 =u^{-m-1}z   \sum_{p\ge 0} (\partial_u-1)^{p}   \big ( H(1/u)\big) \,\, D_p{u^p}.
\end{align*}
Similarly,
\begin{align*}
 \Psi^*(1/u,m)
 &=u^{m-1}z^{-1}\sum_{s\in \bZ}   \left( \sum_{p> 0}  (-1)^p \phi^{-p+1} e^{*}_{p-s}{(-u)^s}\,D^{(-p)}  \right) \\
&=  u^{m-1}z^{-1} \sum_{r\in \bZ}  e^{*}_{r}  \sum_{p> 0}\left( \sum_{i=0}^{p-1}  {{p-1}\choose i} (r+i-1|i)  {(-u)^{-r-i}}\right) D^{(-p)}{u^{p}}\\
&=  u^{m-1}z^{-1}  \sum_{p\ge 0}  (1+\partial_u)^{p-1}   \left(\sum_{r\in \bZ}  e^{*}_{r}{(-u)^{-r}} \right) D^{(-p)} {u^{p}}\\
&=  u^{m-1}z^{-1} \sum_{p\ge 0} (1+\partial_u)^{p-1}  (  E(-1/u)) \,D^{(-p)} u^{p}.
\end{align*}
For the proof of the third equality write,  using (\ref{pr1}),
\begin{align*}
\tilde \Psi(u,m)
&=z\frac{1}{(u|m)}\sum_{k\in \bZ}   \left( \sum_{p\ge 0}   (-1)^p h^{(p)}_{k-m-1}D_p  \right)\frac{1} {(u-m|k-m-1)}\\
&=z\frac{1}{(u|m)}\sum_{s\in \bZ}   \left( \sum_{p\ge 0}   (-1)^p \phi^p h^{*}_{s+p}D_p  \right) \frac{1} {(u-m|s)}\\
&= z \frac{1}{(u|m)}  \sum_{p\ge 0} \left( \sum_{i=0}^p \sum_{s\in \bZ} {p\choose i} (s+p-1|i)h^{*}_{s+p-i} \frac{1}{(u-m|s)}\right) (-1)^pD_p \\
&=z\frac{1}{(u|m)}\sum_{r\in \bZ}  h^{*}_{r}   \sum_{p\ge 0} \left( \sum_{i=0}^p  {p\choose i} \frac{(r+i-1|i) }{(u-m|r-p+i)}\right) (-1)^pD_p.
\end{align*}

Using that  for $v\notin \bZ_{\le 0}$  and  $r,t\in\bZ$, $p\in \bZ_{>0}$,
 \begin{align*}
 \sum_{k=0}^p {p\choose k} \frac{(r+k-1|k) }{(v|t-p+k)}
 = \frac{{\bf\Gamma}(v-t+1) {\bf\Gamma}(v-t+p+r+1)}{{\bf \Gamma}(v-t+r+1){\bf \Gamma}(v+1)}=\frac{1}{(v-t+r|r)(v|t-p-r)},
\end{align*}

we derive the third statement:
\begin{align*}
\tilde  \Psi(u,m)
&= z\frac{1}{(u|m)} \sum_{r\in \bZ}   \frac{ h^{*}_{r}}{(u-m|r)}\sum_{p\ge 0}{\frac{(-1)^pD_p}{(u-m|-p)}}.
\end{align*}
Similarly,
\begin{align*}
\tilde \Psi^*(u,m)
&=z^{-1}{(-u|m)} \sum_{k\in \bZ}  \sum_{p> 0} e^{(p)}_{k-m+1}\, { (-1)^{k-m+1} (-u-m|k-m+1)}\, (-1)^pD^{(-p)}\\
&=z^{-1}{(-u|m)}\sum_{s\in \bZ}   \sum_{p> 0} \phi^{-p+1} (e^{*}_{p-s})\,  {(-1)^s} {(-u-m|s)} \,(-1)^pD^{(-p)}  \\
&=z^{-1}{(-u|m)}\sum_{s\in \bZ}   \left( \sum_{p> 0}  \sum_{i=0}^{p-1}  {{p-1}\choose i}  \frac{(p-s-1|i)} {(u+m-1|-s)}e^{*}_{p-s-i}(-1)^{p}D^{(-p)}  \right)\\
&= z^{-1}{(-u|m)} \sum_{r\in \bZ}  e^{*}_{r}  \sum_{p> 0}\left( \sum_{i=0}^{p-1}  {{p-1}\choose i}  \,  \frac{(r+i-1|i)} {(u+m-1|r+i-p)}\right) (-1)^{p}D^{(-p)}\\
&= z^{-1}{(-u|m)} \sum_{r\in \bZ}  \sum_{p> 0}     \frac{e^{*}_{r}\,(-1)^{p}D^{(-p)} } {(u+m|r)(u+m-1|-p)}\\
&= z^{-1} {(-u|m)} \sum_{r\in \bZ}   \tilde E(u+m) D\tilde H(u+m-1).
\end{align*}
and the fourth statement  is proved.
\end{proof}

\subsection*{Linear recurrence }
Assume that  elements $h^{(p)}_k$ of $B$ are given by a  linear recurrence relation
\begin{align}\label{rec2}
h^{(p)}_k= \sum_{i} t_{ki} h^{(p-1)}_{i},\quad  t_{ki}\in \bC, \quad k,i\in\bZ.
\end{align}
Note that  condition (\ref{prop1}) implies that $t_{k\,k+1}=1$ and $t_{k\,m}=0$ for $m>k+1$. 
We  combine the  coefficients  into an infinite matrix $T=\sum_{kl}t_{-k\,-l} E_{kl}$.
Classical symmetric functions are a particular case of this example.

Even though  the Clifford algebra action is  automatically defined   by (\ref{p10}), (\ref{p100}), a  recursion of type (\ref{rec2}) may not lead in general   to  a  desired decomposition of  formal distributions $\Psi(u,m), \Psi^{*}(u,m)$ as a  product of
 generating functions  of the type $H(u)$, $DH(u)$, $E(u)$, $DE(u)$. In this case one may want   to substitute the relation on generating functions by  a relation  on generating matrices and vectors.

 Namely,  let $\mathcal H$  and  $\mathcal E$  be infinite  generating matrices, defined  as  in
(\ref{HE}). Let ${\mathcal  H}\,^{(p)}$ be the $p$-th column of $\mathcal H$, and  let  ${\mathcal  E}\,^{(p)}$ be the the $(-p)$-th row of $\mathcal E$.
Note that
$ {\mathcal  H}\,^{(p)}=T{\mathcal  H}\,^{(p-1)}$  and the property  $\mathcal H\mathcal E=\mathcal E\mathcal H= Id$  implies that
${\mathcal  E}\,^{(p)}T={\mathcal  E}\,^{(p-1)}$, and

\begin{align*}
e^{(p)}_k=\sum_{i}(-1)^{k+i+1}e^{(p-1)}_i t_{i\,k}.
\end{align*}

Define the generating vectors $\overline{\Psi}(m), \overline{\Psi}^{*}(m)$ for $ \psi_k, \psi^{*}_{k}$ acting on $\Bo$ as follows:
\begin{align*}
\overline{\Psi}(m)=
 \begin{pmatrix}
\dots,\\
 \psi_{k+1}|_{B_m},\\
 \psi_{k}|_{B_m},\\
  \psi_{k-1}|_{B_m},\\
 \dots \end{pmatrix},
 \quad
 \overline{\Psi}^{*}(m)= \begin{pmatrix}
\dots,
 \psi^*_{k+1}|_{B_m},
 \psi^*_{k}|_{B_m},
  \psi^*_{k-1}|_{B_m},
 \dots \end{pmatrix},
\end{align*}
both vectors have   $\psi_k|_{B_m}$ or $ \psi^{*}_{k}|_{B_m}$  entry on the $(-k)$th place.
Denote  as  $L$ the shift operator  $\sum_{k}E_{k,k+1}$.
Then (\ref{vk11}) and  (\ref{vk22})  can be expressed as a relation on  vectors and matrices those entires are operators on $\Bo$:
\begin{align*}
\overline{\Psi}(m)= zL^{m+1}\sum_{p\in \bZ}(-T)^{p} \,{ {\mathcal H}}\,^{(0)}\,    D_p,\quad
\overline{\Psi}^{*}(m)=  z^{-1}L^{m-1}\sum_{p\in\bZ} \,{{\mathcal E}}\,^{(0)}\, T^{p}   D^{(-p)}.
\end{align*}
\begin{example}
For ordinary symmetric functions we  have  $T=L^{-1}$.
\end{example}
\begin{example}
 In \cite{SV}  a Jacobi --Trudi identity is proved for a  large class of  generalized symmetric polynomials, 
 where  generalized Schur polynomials are given by  the formula (\ref{jt1})
with recurrence relation
\begin{align*}
h^{(p+1)}_k= h^{(p)}_{k+1}+a(k)h^{(p)}_k+ b(k)h^{(p)}_{k-1},
\end{align*}
which corresponds to the tridiagonal operator
$T= \sum_{k}  E_{k,k-1}+a({-k})E_{k,k}+b(-k)E_{k,k+1}$.
\end{example}

\begin{example}
Let the elements $h^{(p)}_k$  be defined as
\begin{align}\label{linr}
h^{(p)}_k=\sum_{i=-1}^{l} a_{i}h^{(p-1)}_{k-i} ,
\end{align}
where  $\{a_{-1}=1, a_{0},\dots,  a_{l}\}$ are    fixed complex constants.

This recursion corresponds to  the sum of powers of a shift operator:
$
  T=
  \sum_{i=-1}^{l}a_iL^i.
$ In this particular case it is  possible to decompose formal  distributions   $\Psi(u,m)$ and $\Psi^*(u,m)$ in the form similar to (\ref{psi1}), (\ref{psi11}).

\begin{proposition}\label{prop_lin} Set  $f(u)= \sum_{i=-1}^{l}a_iu^i $.
The recurrence relation (\ref{linr}) with the constants $\{a_{-1}=1, a_{0},\dots,  a_{l}\}$
gives the  generating functions $\Psi(u,m)$
 \begin{align}\label{zuk}
\Psi(u,m)&=u^{m+1}z
 H(u)DE(  -f(u)),\\
 \Psi^*(u,m)&=u^{-m+1}z^{-1}
 E(-u^{-1})DH(  f(u)^{-1}),
\end{align}
 where $H(u)= \sum_{k\in \bZ}   h^{(0)}_{k}u^k$, $E(u)= \sum_{k\in \bZ}   e^{(0)}_{k}u^k$, $DE(u)=\sum_{p\in{\bZ}} D_{p} u^p$, $DH(u)=\sum_{p} D^{(p)} u^p$.
\end{proposition}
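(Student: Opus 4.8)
The plan is to feed the linear recurrence $T=\sum_{i=-1}^{l}a_iL^i$ into the two generating-vector/matrix formulas established just before the statement, namely
$\overline{\Psi}(m)= zL^{m+1}\sum_{p}(-T)^{p}\,{\mathcal H}^{(0)}D_p$ and $\overline{\Psi}^{*}(m)= z^{-1}L^{m-1}\sum_{p}{\mathcal E}^{(0)}T^{p}D^{(-p)}$, and to convert them back into ordinary generating functions in $u$. The key point is that $L$ acts as multiplication by $u$ on generating functions indexed by row: the column ${\mathcal H}^{(0)}$ packages the $h^{(0)}_k$, so $\sum_k h^{(0)}_k u^k=H(u)$ is the scalar generating function, and applying $T$ to that column corresponds to multiplying the generating variable by $f(u)=\sum_{i=-1}^l a_iu^i$, since $T=\sum a_iL^i$ and $L\leftrightarrow u$.

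Concretely, for $\Psi(u,m)=\sum_k\psi_k|_{B^m}u^k$ I would start from (\ref{vk22}), $\psi_k|_{B^m}=\sum_p(-1)^ph^{(p)}_{k-m-1}D_pz$, sum against $u^k$, and shift the index to get $\Psi(u,m)=u^{m+1}z\sum_p(-1)^pD_p\sum_s h^{(p)}_s u^s$. The inner sum $\sum_s h^{(p)}_s u^s$ is obtained from $H(u)=\sum_s h^{(0)}_s u^s$ by $p$ applications of $T$; because $T=\sum_{i=-1}^l a_iL^i$ and each $L$ contributes a factor $u$, one checks that $\sum_s h^{(p)}_s u^s = f(u)^p H(u)$. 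Hence $\Psi(u,m)=u^{m+1}zH(u)\sum_p(-1)^pf(u)^pD_p=u^{m+1}zH(u)\,DE(-f(u))$, using the definition $DE(u)=\sum_p D_pu^p$. The same bookkeeping applied to (\ref{vk11}) together with $e^{(p)}_k=\sum_i(-1)^{k+i+1}e^{(p-1)}_it_{ik}$ gives $\sum_k e^{(p)}_k u^{-k}=f(u)^{-p}\sum_k e^{(0)}_k u^{-k}=f(u)^{-p}E(-u^{-1})$ (the sign $(-1)^{k-m+1}$ in $e^{(p)}_{k-m+1}$ has to be tracked carefully; this is why $E$ is evaluated at $-u^{-1}$), and then summing (\ref{vk11}) against $u^{-k}$ produces $\Psi^*(u,m)=u^{-m+1}z^{-1}E(-u^{-1})\sum_pf(u)^pD^{(-p)}=u^{-m+1}z^{-1}E(-u^{-1})\,DH(f(u)^{-1})$, where $DH(u)=\sum_pD^{(p)}u^p$.

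The main obstacle, and the only place requiring genuine care, is the bookkeeping of index shifts and sign conventions: verifying that $T$ acting on the column ${\mathcal H}^{(0)}$ really corresponds to multiplication by $f(u)$ rather than $f(u^{-1})$ or $f(1/u)$, and similarly that $T$ on the row ${\mathcal E}^{(0)}$ (whose entries carry the signs $(-1)^{a-p}$) corresponds to division by $f(u)$ and produces $E(-u^{-1})$ with the correct argument. Because $\mathcal H$ is indexed by $\mathcal H_{bp}=h^{(p)}_{-b}$ and $\mathcal E$ by $\mathcal E_{pa}=(-1)^{a-p}e^{(-p)}_{-a}$, the $u$-variable in $\Psi$ and in $\Psi^*$ enters with opposite exponents, so one must be consistent about whether a column is being read "top to bottom" or with the opposite sign of exponent. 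Once the identification $\sum_s h^{(p)}_s u^s=f(u)^pH(u)$ and its $\mathcal E$-analogue are nailed down, everything else is a direct substitution into the already-proved matrix formulas for $\overline{\Psi}(m)$ and $\overline{\Psi}^{*}(m)$, and the convergence/stabilization issues are exactly those already handled in Section \ref{ident}, so no new analytic input is needed.
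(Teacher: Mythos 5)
Your first identity is fine, and it is essentially the paper's own argument: iterating (\ref{linr}) gives $\sum_s h^{(p)}_s u^s=f(u)^pH(u)$, and substituting into (\ref{vk22}) yields $u^{m+1}z\,H(u)\sum_{p}(-f(u))^pD_p=u^{m+1}z\,H(u)\,DE(-f(u))$ (only keep the multiplication operators to the left of $D_p$; your intermediate line $\sum_p(-1)^pD_p\sum_s h^{(p)}_su^s$ has them in the wrong order, though your final formula has it right).

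The second identity, however, is not established as written, and the failure sits exactly in the step you deferred to ``careful bookkeeping''. The displayed identity $\sum_k e^{(p)}_k u^{-k}=f(u)^{-p}E(-u^{-1})$ is already false at $p=0$, where the left-hand side is $E(u^{-1})$, not $E(-u^{-1})$; worse, the power of $f$ goes the wrong way. Iterating the componentwise relation $e^{(p)}_k=\sum_i(-1)^{k+i+1}e^{(p-1)}_it_{ik}$ (which you quote), with $t_{ik}=a_{i-k}$, gives $e^{(p)}_k=\sum_{j}(-1)^{j+1}a_j\,e^{(p-1)}_{k+j}$, and hence for the sign-twisted series $\sum_s e^{(p)}_s(-u)^{-s}=(-f(u))^p\,E(-u^{-1})$: each step is \emph{multiplication} by $-f(u)$, never division by $f(u)$. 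The sign-twisted series is the one that actually occurs, because after the substitution $s=k-m+1$ in (\ref{vk11}) the prefactor $(-1)^{k-m+1}$ becomes $(-1)^s$ and attaches to $u^{-s}$. With the corrected identity, $\Psi^*(u,m)=u^{-m+1}z^{-1}\sum_p(-1)^p(-f(u))^p\,E(-u^{-1})\,D^{(-p)}=u^{-m+1}z^{-1}E(-u^{-1})\sum_p f(u)^pD^{(-p)}=u^{-m+1}z^{-1}E(-u^{-1})\,DH(f(u)^{-1})$, as claimed; whereas an inverse power $f(u)^{-p}$ accompanying $D^{(-p)}$, as your identity supplies, cannot be converted into $f(u)^{p}$ by any residual sign bookkeeping, so your final line is asserted rather than derived. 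Part of the confusion likely comes from the matrix relation quoted before the proposition: with ${\mathcal E}^{(p)}$ the $(-p)$-th row of $\mathcal E$ one in fact has ${\mathcal E}^{(p)}T={\mathcal E}^{(p+1)}$, i.e.\ ${\mathcal E}^{(p)}={\mathcal E}^{(0)}T^{p}$ (the paper's ${\mathcal E}^{(p)}T={\mathcal E}^{(p-1)}$ has an index slip), so on the $\mathcal E$ side one again multiplies by a signed $f$; the componentwise formula above is the reliable one to iterate. So your route is the paper's route, but the $\Psi^*$ half needs this corrected generating-function identity before it goes through.
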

\begin{proof}
\begin{align*}
\Psi(u,m)
&
=u^{m+1}z\sum_{p\ge 0} (-1)^p \sum_{s\in \bZ}  \sum_{-1\le i_1\dots i_p\le l} a_{i_1}\dots a_{i_p}   h^{(0)}_{s-i_1-\dots- i_p}  D^{p} u^s\\
&
=u^{m+1}z \sum_{t\in \bZ}   h^{(0)}_{t}u^t\,  \sum_{p\ge 0}  \sum_{-1 \le  i_1, \dots, i_p\le l} a_{i_1}\dots a_{i_p}    u^{i_1+\dots+i_p} (-1)^pD^{p} \\
&
=u^{m+1}z  \sum_{t\in \bZ}   h^{(0)}_{t}u^t\, \sum_{p\ge 0}  \big(-\sum_{i=-1}^{l} a_i   u^{i}\big)^pD^{p},
\end{align*}
and  (\ref{zuk})  follows. Using that
$e^{(p)}_k=\sum_{i=-1}^{l} (-1)^{i-1}a_{i}e^{(p-1)}_{k+i}$,
 the proof of the second equality follows  the same lines.
\end{proof}
\end{example}
\begin{remark}
Recall that for ordinary symmetric functions,
$h^{(p)}_k=h_{p+k}$, $e^{(p)}_k=e_{p-k}$, $D^{(-p)}= D_{h_{p-1}}$, $D_p=D_{e_p}$,  and all together it is a particular case of Proposition \ref{prop_lin}
that corresponds to the sequence  $\{a_{-1}=1, a_{1}=0,\dots,  a_{l}=0\}$. Thus,  Proposition \ref{prop_lin} is a simple alternative combinatorial way to deduce the formulas of
classical vertex operators  (\ref{psi1},  \ref{psi11},  \ref{psi2},  \ref{psi21}).
\end{remark}

\bigskip


\bigskip

\end{document}